\documentclass[12pt,a4paper]{article}
\usepackage{amsthm}
\usepackage{amsfonts}
\usepackage{mathrsfs}
\usepackage{amscd,amssymb,amsmath,graphicx,verbatim,xy,bm}
\usepackage[pdftex]{hyperref}
\usepackage[TS1,OT1,T1]{fontenc}
\usepackage{geometry}
\textwidth=160truemm \textheight=215truemm \evensidemargin=0mm
\oddsidemargin=0mm \topmargin=0mm \headsep=0mm
\parindent=2em
\newtheorem{theorem}{Theorem}[section]
\newtheorem{lemma}[theorem]{Lemma}
\newtheorem{corollary}[theorem]{Corollary}
\newtheorem{proposition}[theorem]{Proposition}
\newtheorem{notation}[theorem]{}

\theoremstyle{definition}
\newtheorem{definition}[theorem]{Definition}

\theoremstyle{remark}
\newtheorem{remark}[theorem]{Remark}

\catcode`@=11
\let\@int\int \def\int{\displaystyle\@int}
\let\@lim\lim \def\lim{\displaystyle\@lim}
\let\@sup\sup \def\sup{\displaystyle\@sup}
\let\@inf\inf \def\inf{\displaystyle\@inf}
\let\@cap\cap \def\cap{\displaystyle\@cap}
\let\@cup\cup \def\cup{\displaystyle\@cup}
\let\@max\max \def\max{\displaystyle\@max}
\let\@min\min \def\min{\displaystyle\@min}
\let\@iint\iint \def\iint{\displaystyle\@iint}

\def\epsilon{\varepsilon}

\newcommand{\C}{\mathrm{C}}


\begin{document}
\title{ A decomposition theorem for real rank zero inductive limits of 1-dimensional non-commutative CW complexes
\author{Zhichao Liu
\\
{\small\textit{}}}
\date{}
}
\maketitle

\begin{abstract}
  In this paper, we consider the real rank zero $\C^*$-algebras which can be written as an inductive limit of the Elliott-Thomsen building blocks
  and prove a decomposition result for the connecting homomorphisms; this technique will be used in the classification theorem.

  {\sl Keywords:} decomposition, real rank zero, Elliott-Thomsen algebra.
  \\
   {\sl 2000 MR Subject Classification:} Primary 46L80; Secondary 19K14, 19K33, 19K35.
\end{abstract}

\section*{Introduction}

   The problem people we considered is to classify $\C^*$-algebras by their K-theoretical data. The first result of this kind was
   the classification of inductive limits of sequence of finite direct sums of matrix algebras
 (called AF algebras)\cite{Ell1}. This result was extended in \cite{Ell2}. It replaced the matrix algebras by matrix algebras over the unit circle and restricted the limit of $\C^*$-algebras to have real rank zero. In \cite{EG}, \cite{EGS} and \cite{DG}, a classification was given to classes of separable nuclear $\C^*$-algebras of real rank zero.

 The decomposition theory plays an important role in the classification theorem. It will have some similar forms and reflect the
 property of the connecting homomorphisms. If $A$ is an $AH$ algebra with real rank zero, then \cite{EG} proves a decomposition result which says
 that $\phi_{m,n}$ can be approximately decomposed as a sum of two parts, $\phi_1$ and $\phi_2$; one part having a very small support
 projection, and the other part factoring through a finite dimensional algebra.

 In this article, the basic building blocks we consider were introduced by Elliott in \cite{Ell3} and Thomsen in \cite{ET}, which are also called one dimensional non-commutative finite CW complexes. The $\C^*$-algebras we consider are the algebras which can be expressed as the real rank zero inductive limit of a sequence
 $$
 A_1\,\xrightarrow{\phi_{1,2}}\,A_2\,\xrightarrow{\phi_{2,3}}\,A_3\rightarrow\cdots
 $$
 with $A_n=\oplus_{i=1}^{n_i}A_{[n,i]}$, where all the $A_{[n,i]}$ are Elliott-Thomsen building blocks and $\phi_{n,n+1}$ are homomorphisms. It will be shown that many inductive limit algebras are within the above class. We  will prove a decomposition result for these algebras, which will be
 used in the proof of uniqueness theorem and of the existence theorem.

 This paper is organized as follows. In Section 1, we will introduce some notations, and collect some known results. In Section 2, we use the generalized
 pairing lemma to get some comparable results about the spectra. In Section 3, we will prove the main decomposition result.


\section{Preliminaries}
\begin{definition}
  Let $F_1$ and $F_2$ be two finite dimensional $\C^*$-algebras. Suppose that there are two unital homomorphisms $\varphi_0,\varphi_1\,:F_1\rightarrow F_2$. Denote
  $$
  A=A(F_1,F_2,\varphi_0,\varphi_1)=\{(f,a)\in C([0,1],F_2)\oplus F_1 :f(0)=\varphi_0(a)\,\,and\,\,f(1)=\varphi_1(a)\}.
  $$
\end{definition}
 These $\C^*$-algebras have been introduced into the Elliott program by Elliott and Thomsen. Denote by $\mathcal{C}$ the class of all unital $\C^*$-algebras of the form $A(F_1,F_2,\varphi_0,\varphi_1)$ and all the finite dimensional $\C^*$-algebras. These $\C^*$-algebras will be called Elliott-Thomsen building blocks. A unital ${\mathrm C}^*$-algebra $A\in\mathcal{C}$ is minimal, or a minimal block, if it is indecomposable, i.e., not the direct sum of two or more ${\mathrm C}^*$-algebras in $\mathcal{C}$(see also \cite{GLN}).

\begin{proposition}
  Let $A=A(F_1,F_2,\varphi_0,\varphi_1)$, where $F_1=\oplus_{j=1}^p M_{k_j}(\mathbb{C}),\,F_2=\oplus_{i=1}^lM_{l_i}(\mathbb{C})$ and $\varphi_0,\varphi_1 :F_1\rightarrow F_2$ be unital homomorphisms.
  Let $\varphi_{0*},\varphi_{1*}:K_0(F_1)=\mathbb{Z}^p\rightarrow K_0(F_1)=\mathbb{Z}^l$ be represented by matrices $\alpha=({\alpha_{ij}})_{l\times p}$ and $\beta=({\beta_{ij}})_{l\times p}$, where $\alpha_{ij},\beta_{ij}\in \mathbb{Z}_+$ for each pair $i,j$. Then
  $$
  K_0(A)=
  \left\{
  \left(
\begin{array}{c}
  x_1 \\
  x_2 \\
   \vdots\\
  x_p
\end{array}
  \right)
\in \mathbb{Z}^p,\,\,with\,\,(\alpha -\beta)
  \left(
\begin{array}{c}
  x_1 \\
  x_2 \\
   \vdots\\
  x_p
\end{array}
  \right)
  =0
\right\}
 $$
 $$
 K_1(A)=\mathbb{Z}^l/Im(\alpha-\beta).
 $$
\end{proposition}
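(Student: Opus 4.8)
The plan is to realise $A$ as a pullback of an interval algebra over a pair of finite-dimensional algebras and then to read off $K_0$ and $K_1$ from the Mayer--Vietoris six-term exact sequence; the one substantial point will be to identify the connecting map honestly in terms of $\alpha$ and $\beta$.

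First I would record the defining pullback square. Put $B=C([0,1],F_2)$ and $D=F_2\oplus F_2$, and let $\rho\colon B\to D$ be $\rho(f)=(f(0),f(1))$ and $\sigma\colon F_1\to D$ be $\sigma(a)=(\varphi_0(a),\varphi_1(a))$. Directly from the definition of $A$,
\[
A=\{(f,a)\in B\oplus F_1:\rho(f)=\sigma(a)\},
\]
i.e.\ $A$ is the pullback of $B\xrightarrow{\rho}D\xleftarrow{\sigma}F_1$. Since $F_2$ is a finite-dimensional, hence path-connected, vector space, any two of its points are joined by a continuous path, so $\rho$ is surjective; equivalently the quotient map $A\to F_1$, $(f,a)\mapsto a$, is onto with kernel $C_0((0,1),F_2)=SF_2$. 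This surjectivity is exactly the hypothesis needed to apply the Mayer--Vietoris sequence
\[
K_0(A)\to K_0(B)\oplus K_0(F_1)\xrightarrow{\ \rho_*-\sigma_*\ }K_0(D)\to K_1(A)\to K_1(B)\oplus K_1(F_1)\to K_1(D)
\]
(equivalently, the six-term sequence attached to $0\to SF_2\to A\to F_1\to 0$, using $K_0(SF_2)\cong K_1(F_2)=0$ and $K_1(SF_2)\cong K_0(F_2)=\mathbb{Z}^l$).

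Next I would insert the $K$-groups of the pieces. Because $[0,1]$ is contractible, evaluation at $0$ gives $K_0(B)\cong K_0(F_2)=\mathbb{Z}^l$ and $K_1(B)=0$; also $K_0(F_1)=\mathbb{Z}^p$, $K_1(F_1)=0$, $K_0(D)=\mathbb{Z}^l\oplus\mathbb{Z}^l$, $K_1(D)=0$. The sequence collapses to
\[
0\to K_0(A)\to\mathbb{Z}^l\oplus\mathbb{Z}^p\xrightarrow{\ \psi\ }\mathbb{Z}^l\oplus\mathbb{Z}^l\to K_1(A)\to 0 ,
\]
so $K_0(A)=\ker\psi$ and $K_1(A)=\mathrm{coker}\,\psi$. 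The decisive step, and the one I expect to be the main obstacle, is to compute $\psi$. Here I would use that $\mathrm{ev}_0$ and $\mathrm{ev}_1\colon B\to F_2$ are homotopic, so that $\rho_*$ becomes the diagonal $y\mapsto(y,y)$ under the identification above; and that, by the very definition of $\alpha=(\alpha_{ij})$ and $\beta=(\beta_{ij})$ as the matrices of $\varphi_{0*},\varphi_{1*}$ on $K_0$, one has $\sigma_*(x)=(\alpha x,\beta x)$. Hence $\psi(y,x)=(y-\alpha x,\,y-\beta x)$.

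The rest is linear algebra. The kernel of $\psi$ consists of the pairs with $y=\alpha x=\beta x$; projecting onto the $x$-coordinate identifies it with $\{x\in\mathbb{Z}^p:(\alpha-\beta)x=0\}$, which is the claimed description of $K_0(A)$. For the cokernel, the surjection $\mathbb{Z}^l\oplus\mathbb{Z}^l\to\mathbb{Z}^l$, $(a,b)\mapsto a-b$, has kernel the diagonal $\{(a,a)\}=\psi(\mathbb{Z}^l\oplus 0)\subseteq\mathrm{Im}\,\psi$, and it carries $\mathrm{Im}\,\psi$ onto $\{(y-\alpha x)-(y-\beta x):x\in\mathbb{Z}^p\}=\mathrm{Im}(\alpha-\beta)$; therefore it induces $K_1(A)=\mathrm{coker}\,\psi\cong\mathbb{Z}^l/\mathrm{Im}(\alpha-\beta)$, as asserted. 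What remains is only bookkeeping: verifying the exactness hypothesis for Mayer--Vietoris, tracking the (harmless) sign in the connecting map, and noting that the positivity $\alpha_{ij},\beta_{ij}\in\mathbb{Z}_+$ plays no role in these two groups --- it matters only for the order structure on $K_0$.
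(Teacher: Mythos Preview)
Your argument is correct and is the standard route to this computation. Note, however, that the paper does not actually supply a proof of this proposition: it is stated as a known fact about Elliott--Thomsen building blocks (it appears, for instance, in the references \cite{GLN} and \cite{ET}), and the paper moves directly on to notation. So there is no ``paper's own proof'' to compare against; what you have written is essentially the proof one finds in the literature, via the six-term exact sequence associated to $0\to SF_2\to A\to F_1\to 0$ (equivalently, Mayer--Vietoris for the pullback). Your identification of $\psi$ and the subsequent linear algebra are fine; the sign discrepancy you flag is indeed harmless since $\mathrm{Im}(\alpha-\beta)=\mathrm{Im}(\beta-\alpha)$ as subgroups of $\mathbb{Z}^l$.
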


\begin{notation}\rm
   We use the notation $\#(\cdot)$ to denote the cardinal number of the set, the sets under consideration will be sets with multiplicity,
  and then we  shall also count multiplicity when we use the notation $\#$.
\end{notation}
 \begin{notation}
 {\rm
 Let $\phi:A\rightarrow M_n(\mathbb{C})$ be a homomorphism. Then there exists a unitary $u$ such that
 $$
 \phi(f,a)=u^*\cdot{\rm diag}\big(\underbrace{a(\theta_1),
 \cdots,a(\theta_1)}_{t_1},\cdots,\underbrace{a(\theta_p),\cdots,a(\theta_p)}_{t_p},f(y_1),\cdots,f(y_{\bullet})\big)\cdot u,
 $$
 where $y_1,y_2,\cdots,y_{\bullet}\in \coprod_{i=1}^{l}[0,1]_i$. We shall use $\bullet$ to denote any possible positive integer. For $y=(0,i)$(also denoted by $0_i$), one can replace $f(y)$ by
 $$\big(\underbrace{a(\theta_1),
 \cdots,a(\theta_1)}_{\alpha_{i1}},\cdots,\underbrace{a(\theta_p),\cdots,a(\theta_p)}_{\alpha_{ip}}\big)$$
 in the above expression, and do the same with $y=(1,i)$. After this procedure, we can assume each $y_k$ is strictly in the open interval $(0,1)_i$ for some $i$.
 We write the spectrum of $\phi$ by
 $$
 Sp\phi=\{\theta_1^{\thicksim t_1},\theta_2^{\thicksim t_2},\cdots,\theta_p^{\thicksim t_p},y_1,y_2,\cdots,y_{\bullet}\},
 $$
 where $y_k\in \coprod_{i=1}^{l}(0,1)_i$ and we use $\{{\theta_j}^{\thicksim t_j}\}$ to denote $\{\underbrace{\theta_j,\cdots,\theta_j}_{t_j\ times}\}$.

Let $\omega_i=\#(Sp\phi\cap(0,1)_i)$ denote the number of $y_k$'s which are in the $i^{th}$ open interval $(0,1)_i$  counting multiplicity.
  If $\phi$ is unital then we have
 $$
 \sum_{j=1}^{p}t_jk_j+\sum_{i=1}^{l}\omega_il_i=n.
 $$

 If $f=f^*\in A$, we use $Eg(\phi(f))$ to denote the eigenvalue list of $\phi(f)$, then

 $$\#(Eg(\phi(f)))=n\,\,{\rm(counting\,\,multiplicity)}.$$

 }
 \end{notation}

\begin{notation}
  {\rm
  Let us use $\theta_1,\theta_2,\cdots,\theta_p$ denote the spectrum of $F_1$ and denote the spectrum of $C([0,1],F_2)$ by $(t,i)$,
 where $0\leq t\leq1$ and $i\in \{1,2,\cdots,l\}$ indicates that it is in $i$th block of $F_2$. So
 $$
 Sp(C([0,1],F_2))=\coprod_{i=1}^{l}\{(t,i),\,0\leq t\leq1\}.
 $$
   Using identification of $f(0)=\varphi_0(a)$ and $f(1)=\varphi_1(a)$ for $(f,a)\in A,\,(0,i)\in Sp(C[0,1])$ is identified with
 $$
 (\theta_1^{\thicksim\alpha_{i1}},\theta_2^{\thicksim\alpha_{i2}},\cdots,\theta_p^{\thicksim\alpha_{ip}})\subset Sp(F_1)
 $$
 and $(1,i)\in Sp(C([0,1],F_2))$ is identified with
 $$
 (\theta_1^{\thicksim\beta_{i1}},\theta_2^{\thicksim\beta_{i2}},\cdots,\theta_p^{\thicksim\beta_{ip}})\subset Sp(F_1)
 $$
 as in $Sp(A)=Sp(F_1)\cup \coprod_{i=1}^{l}(0,1)_i$.
 }
\end{notation}

\begin{notation}
  {\rm
  Let $A=A(F_1,F_2,\varphi_0,\varphi_1)$. Written $a\in F_1$ as $a=(a(\theta_1),a(\theta_2),\cdots,a(\theta_p))$,
  $f(t)\in C([0,1],F_2)$ as $$f(t)=(f(t,1),f(t,2),\cdots,f(t,l))$$ where $a(\theta_j)\in M_{k_j}(\mathbb{C})$,
  $f(t,i)\in C([0,1],M_{l_i}(\mathbb{C}))$. For any $(f,a)\in A$ and $i\in\{1,2,\cdots,l\}$,
  define $\pi_t:A\rightarrow C([0,1],F_2)$ by $\pi_t(f,a)=f(t)$ and $\pi_t^i:A\rightarrow C([0,1],M_{l_i}(\mathbb{C}))$ by
  $\pi_t^i(f,a)=f(t,i)$ where $t\in(0,1)$ and $\pi_0^i(f,a)=f(0,i)$ (denoted by $\varphi_0^i(a)$), $\pi_1^i(f,a)=f(1,i)$
  (denoted by $\varphi_1^i(a)$). There is a canonical map $\pi_e:\,A\rightarrow F_1$ defined by $\pi_e((f,a))=a$,
  for all $j=\{1,2,\cdots,p\}$, define $\pi_e^j:\,A\rightarrow M_{k_j}(\mathbb{C})$ by $\pi_e^j((f,a))=a(\theta_j)$.

  Define $\widetilde{\pi}_0^j: \varphi_0(F_1)\rightarrow M_{k_j}(\mathbb{C})$ by
  $$
  \widetilde{\pi}_0^j\circ\varphi_0(a)={\rm sgn}(\sum_{i=1}^{l}\alpha_{ij})\cdot a(\theta_j)
  $$
  and  $\widetilde{\pi}_1^j: \varphi_1(F_1)\rightarrow M_{k_j}(\mathbb{C})$ by
  $$
  \widetilde{\pi}_1^j\circ\varphi_1(a)={\rm sgn}(\sum_{i=1}^{l}\beta_{ij})\cdot a(\theta_j)
  $$
  for each $j\in\{1,2,\cdots,p\}$ and ${\rm sgn}(x)$ is the sign function.
  }
\end{notation}

\begin{notation}\rm
  In this paper we use the convention that $A=A(F_1,F_2,\varphi_0,\varphi_1),\,B=B(F_1',F_2',\varphi_0',\varphi_1')$,
  with
  $$
  F_1=\bigoplus_{j=1}^p M_{k_j}(\mathbb{C}),\,\,F_2=\bigoplus_{i=1}^lM_{l_i}(\mathbb{C}),\,\,F_1'=\bigoplus_{j'=1}^{p'}M_{k_{j'}'}(\mathbb{C}),\,\,F_2'=\bigoplus_{i'=1}^{l'} M_{l_{i'}'}(\mathbb{C}).
  $$
  Let $\varphi_{0*},\varphi_{1*}$ be represented by matrices $\alpha=({\alpha_{ij}})_{l\times p}$ and $\beta=({\beta_{ij}})_{l\times p}$, and let $\varphi_{0*}',\varphi_{1*}'$ be represented by matrices $\alpha'=({\alpha_{i'j'}'})_{l'\times p'}$ and $\beta=({\beta'_{i'j'}})_{l'\times p'}$.

  Let $L(A)=\sum_{i=1}^ll_i$, $L(B)=\sum_{i'=1}^{l'}l_{i'}'$. Denote $\{e_{ss'}^i\}(1\leq i\leq l,\,1\leq s,s'\leq l_i)$  the set of matrix units for $\oplus_{i=1}^p M_{l_i}({\mathbb{C}})$ and $\{f_{ss'}^j\}(1\leq i\leq p,\,1\leq s,s'\leq k_j)$ the set of matrix units for $\oplus_{j=1}^l  M_{k_j}({\mathbb{C}})$.
\end{notation}


 \begin{notation}
{\rm
  For each $\eta=\frac{1}{m}$ where $m\in\mathbb{N}_+$. Let $0=x_0<x_1<\cdots<x_{m}=1$ be a partition of $[0,1]$ into $m$ subintervals with equal length $\frac{1}{m}$. We will define a finite subset $H(\eta)\subset A_+$, consisting of two kinds of
 elements as described below.

 (a)For each subset $X_j=\{\theta_j\}\subset Sp(F_1)=\{\theta_1,\theta_2,\cdots,\theta_p\}$ and a list of integers $a_1,b_2,\cdots,a_l,b_l$ with
 $0\leq a_i<a_i+2\leq b_i\leq m$, denote $W_j\triangleq\cup_{\alpha_{ij}\neq0}[0,a_i]_i\cup\cup_{\beta_{ij}\neq 0}[b_i,1]_i$.
 Then we call $W_j$  the closed neighborhood of $X_j$, we define element $(f,a)\in A_+$ corresponding to $X_j$  and $W_j$  as follows:

 For each $t\in[0,1]_i,\,i=\{1,2,\cdots,l\}$, define
 $$
 f(t,i)=
 \begin{cases}
  \varphi_0^i(f_{11}^j)\dfrac{\eta-dist(t,[0,a_i\eta]_i)}{\eta}, & \mbox{if } 0\leq t\leq (a_i+1)\eta \\
  0, & \mbox{if } (a_i+1)\eta\leq t\leq (b_i-1)\eta \\
  \varphi_1^i(f_{11}^j)\dfrac{\eta-dist(t,[b_i\eta,1]_i)}{\eta}, & \mbox{if } (b_i-1)\eta\leq t\leq 1
\end{cases}
$$
  All such elements $(f,a)=(f(t,1),f(t,2),\cdots,f(t,l))\in A_+$ are included in the set $H(\eta)$ and are called test functions of type 1.

(b)For each closed subset $X=\cup_s[x_{r_s},x_{r_{s+1}}]_i \subset [\eta,1-\eta]_i$(the finite union of
closed intervals $[x_r,x_{r+1}]$ and points $\{x_r\}$). So there are finite subsets for each $i$. Define $(f,a)$ corresponding to $X$ by $a=0$ and for each
$t\in(0,1)_r,r\neq i,f(t,r)=0$ and for $t\in(0,1)_i$ define
$$
f(t,i)=
\begin{cases}
  e_{11}^i\big(1-\dfrac{dist(t,X)}{\eta}\big )e_{11}^i, & \mbox{if } dist(t,X)<\eta \\
  0, & \mbox{if } dist(t,X)\geq \eta.
\end{cases}
$$
here we use $e_{11}^i$ as the matrix units of $M_{l_i}(\mathbb{C})$. All such elements are called test functions of type 2.

Note that for any closed subset $Y\subset[\eta,1-\eta]$, there is a closed subset $X$ consisting of the union of the
intervals $[x_r,x_{r+1}]$ such that $X\supset Y$ and for any $x\in X$, $dist(x,Y)\leq\eta$.

}
 \end{notation}

 \begin{notation}
 {\rm
 Let $\phi:A\rightarrow M_n(\mathbb{C})$ be a homomorphism. $\phi$ has the expression as 1.3. For a closed neighborhood $W_j$ of $X_j=\{\theta_j\}$, define
 $$
 \#_{X_j}\big (Sp\phi\cap W_j \big):=
 t_j+\sum_{i=1}^{l}\alpha_{ij}\#\big (Sp\phi\cap [0,a_i\eta]_i\big)+\sum_{i=1}^{l}\beta_{ij}\#\big (Sp\phi\cap [b_i\eta,1]_i \big).
 $$
 }
 \end{notation}

 \begin{notation}
 {\rm
 In general, a unital homomorphism $\phi: C[0,1]\rightarrow B$ with finite dimensional image is always of the form :
 $$
 \phi(f)=\sum f(x_k)p_k,\,\,\forall f\in C[0,1]
 $$
 where $\{x_k\}$ is a finite subset of [0,1] and $\{p_k\}$ is a set of mutually orthogonal projections with $\sum p_k=1_B$.
 A homomorphism $\phi: A=M_n(C([0,1]))\rightarrow B$ with finite dimensional image is of the form:
 $$
 \phi(f)=\sum f(x_k)\otimes p_k,\,\,\forall f\in A
 $$
 for a certain identification of $\phi(1_A)B\phi(1_A)\cong M_n(\mathbb{C})\otimes (\phi(e_{11})B\phi(e_{11}))$,
  where $\{p_k\}$ is a set of mutually orthogonal projections in $\phi(e_{11})B\phi(e_{11})$.

 }
 \end{notation}

\section{Pairing results}

\begin{definition}
  A unital $\C^*$-algebra is said to have $real\ rank\ zero$, written $RR(A)=0$, if the set of invertible self-adjoint elements is dense in $A_{sa}$.
\end{definition}

\begin{notation}
 {\rm
  Suppose $A$ is a $\C^*$-algebra, $B\subset A$ is a subalgebra, $F\subset A$ is a finite subset and let $\varepsilon>0$. If
 for each $f\in F$, there exists an element $g\in B$ such that $\|f-g\|<\varepsilon$, then we shall say that $F$ is approximately contained in $B$ to within $\varepsilon$, and denote this by $F\subset_{\varepsilon} B$.
 }
\end{notation}
  The following is clear by the standard techniques of spectral theory \cite{BBEK}.
\begin{lemma}
  Let $A=\underrightarrow{lim}(A_n,\phi_{m,n})$ be a $\C^*$-algebra which is the inductive limit of $\C^*$-algebras $A_n$ with morphisms
  $\phi_{m,n}:A_m\rightarrow A_n$. Then $A$ has $RR(A)=0$ if and only if for any finite self-adjoint subset $F\subset A_m$ and $\varepsilon >0$, there exists $n\geq m$ such
  that $$\phi_{m,n}(F)\subset_{\epsilon}\{ f\in (A_n)_{sa}\,|\,f\,has\,finite\,spectrum\}.$$
\end{lemma}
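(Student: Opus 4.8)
The statement is a standard characterization of real rank zero for inductive limits, and the plan is to prove the two implications separately, using only elementary spectral theory and the definition of the inductive limit topology.

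For the forward direction, suppose $RR(A)=0$ and fix a finite self-adjoint subset $F\subset A_m$ together with $\varepsilon>0$. Let $\phi_{m,\infty}:A_m\to A$ be the canonical map into the limit. Since $RR(A)=0$, each $\phi_{m,\infty}(f)$ for $f\in F$ can be approximated within $\varepsilon/4$ by an invertible self-adjoint element of $A$; applying continuous functional calculus (a step function near the spectrum) one gets, within $\varepsilon/2$, a self-adjoint element of $A$ with finite spectrum. Now I would use the fact that $A=\bigcup_n\phi_{n,\infty}(A_n)^{-\|\cdot\|}$ together with semiprojectivity-type perturbation: a self-adjoint element with finite spectrum is, up to arbitrarily small perturbation, a finite real-linear combination $\sum \lambda_k q_k$ of orthogonal projections, and projections in the limit can be lifted (after a small perturbation and for large $n$) to projections in some $A_n$. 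Choosing $n$ large enough so that all the relevant projections and approximations live in $\phi_{n,\infty}(A_n)$ to within a small tolerance, and pulling back, one obtains self-adjoint finite-spectrum elements $g_f\in A_n$ with $\|\phi_{m,n}(f)-g_f\|<\varepsilon$. The main technical point here is the perturbation/lifting of finite-spectrum self-adjoints through the limit, which is exactly what the cited standard techniques \cite{BBEK} provide.

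For the reverse direction, suppose the approximation condition holds. To show $RR(A)=0$ it suffices to approximate an arbitrary self-adjoint $h\in A$ by an invertible self-adjoint element. Given $\varepsilon>0$, first approximate $h$ within $\varepsilon/3$ by some $\phi_{m,\infty}(f)$ with $f\in A_m$ self-adjoint (replacing $f$ by its self-adjoint part costs nothing). By hypothesis, choose $n\geq m$ and a self-adjoint finite-spectrum element $g\in A_n$ with $\|\phi_{m,n}(f)-g\|<\varepsilon/3$. A self-adjoint element with finite spectrum differs by an arbitrarily small perturbation from one whose spectrum avoids $0$, hence from an invertible self-adjoint element $g'\in A_n$ with $\|g-g'\|<\varepsilon/3$. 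Then $\phi_{n,\infty}(g')$ is invertible self-adjoint in $A$ (the canonical map is a unital $*$-homomorphism, so it preserves invertibility of self-adjoints that are bounded away from $0$), and it lies within $\varepsilon$ of $h$. Since invertible self-adjoints are dense in $A_{sa}$, we conclude $RR(A)=0$.

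I expect the reverse direction to be essentially routine, while the genuine content is the forward direction — specifically, transporting a finite-spectrum approximant from $A$ back down into some finite stage $A_n$. The obstacle is not conceptual but bookkeeping: one must perturb the finite-spectrum element to a linear combination of orthogonal projections, lift those projections into $A_n$ for $n$ large (using that $A_n\hookrightarrow A$ has dense union and that projections are "stable" under small perturbations), and then control the accumulated errors so the total is below $\varepsilon$. All of these are the "standard techniques of spectral theory" referenced before the lemma, so the proof will simply invoke them with the estimates arranged as above.
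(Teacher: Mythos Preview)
The paper does not actually prove this lemma: it states that the result ``is clear by the standard techniques of spectral theory'' and cites \cite{BBEK}, so there is no argument in the paper to compare yours against. Your proposal is the expected standard argument and is essentially correct, with one imprecision worth fixing.

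In the forward direction, a single invertible self-adjoint approximant $b$ to $\phi_{m,\infty}(f)$ does not, by one application of ``a step function near the spectrum,'' produce a finite-spectrum element within $\varepsilon/2$: the spectrum of $b$ need not be small or disconnected, so a step function with finitely many values may be far from the identity on $\sigma(b)$. What is actually used here is the well-known equivalence (this is what \cite{BBEK} supplies) that $RR(A)=0$ if and only if self-adjoint elements with finite spectrum are dense in $A_{sa}$; the proof of that equivalence applies the invertible-approximation hypothesis at finitely many spectral values $\lambda_1,\dots,\lambda_N$ to produce gaps in the spectrum and hence spectral projections. Once you invoke that equivalence directly, the rest of your forward argument (perturb to $\sum\lambda_k q_k$, lift the projections into a late stage $A_n$, control the errors) is exactly right.

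Your reverse direction is fine as written. The only tacit assumption is that the connecting maps, and hence $\phi_{n,\infty}$, are unital, so that invertibility of $g'$ in $A_n$ passes to invertibility of $\phi_{n,\infty}(g')$ in $A$; in the setting of this paper that is the standing hypothesis, so no change is needed.
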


\begin{notation}
  {\rm
  The following fact is well known. (called Pairing Lemma\cite{Su} and Marriage Lemma in \cite{HV})

  Let $X,Y$ be two sets of finitely many points in a metric space, $\#(X)=\#(Y)$. Suppose that for any subset $\widetilde{X}\subset X$,
  $$
  \#\big \{y\in Y,\, dist(y,\widetilde{X})<\varepsilon\big \}\geq\#(\widetilde{X}),
  $$
  and any subset $\widetilde{Y}\subset Y$,
  $$
  \#\big\{x\in X,\, dist(x,\widetilde{Y})<\varepsilon \big \}\geq\#(\widetilde{Y}).
  $$
  Then $X$ and $Y$ can be paired to within $\varepsilon$ one by one.

  }
\end{notation}

  We need the following generalization of the pairing lemma due to Gong  (see \cite{EGLN})

\begin{lemma}\label{2.5}
  Let $X,Y$ be two sets of a metric space with subsets $X_1\subset X$ and $Y_1\subset Y$. Suppose that for each subset $\widetilde{X}\subset X_1$,
  $$
  \#\big \{y\in Y,\, dist(y,\widetilde{X})<\varepsilon\big \}\geq\#(\widetilde{X})
  $$
  and for each subset $\widetilde{Y}\subset Y_1$,
  $$
  \#\big\{x\in X,\, dist(x,\widetilde{Y})<\varepsilon \big \}\geq\#(\widetilde{Y}).
  $$
  Then there are subsets $X_0\subset X$ ,$Y_0\subset Y$ with $X_0\supset X_1$, $Y_0\supset Y_1$ such that $X_0$ and $Y_0$ can be paired to within $\varepsilon$ one by one.
\end{lemma}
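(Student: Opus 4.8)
The plan is to encode the hypotheses as a bipartite matching problem and then merge two applications of Hall's marriage theorem by an alternating-path surgery.

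Since the sets in question are finite (counting multiplicity), I would form the bipartite graph $G$ whose two vertex classes are the multisets $X$ and $Y$ — a point of multiplicity $k$ contributing $k$ distinct vertices located there — with an edge between $x\in X$ and $y\in Y$ exactly when $\mathrm{dist}(x,y)<\varepsilon$. A one-to-one pairing to within $\varepsilon$ of subsets $X_0\subset X$ and $Y_0\subset Y$ is precisely a matching of $G$ whose saturated vertices are $X_0$ on the $X$-side and $Y_0$ on the $Y$-side, so it suffices to produce a single matching $M$ of $G$ saturating every vertex of $X_1\cup Y_1$; one then takes $X_0$ and $Y_0$ to be the $M$-saturated vertices in $X$ and in $Y$, which automatically gives $X_0\supset X_1$, $Y_0\supset Y_1$ and $\#X_0=\#Y_0$.

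First I would apply Hall's theorem twice. For $\widetilde X\subset X_1$ the set $\{y\in Y:\mathrm{dist}(y,\widetilde X)<\varepsilon\}$ is exactly the $G$-neighbourhood of $\widetilde X$, so the first hypothesis is the Hall condition for $X_1$ and yields a matching $M_1$ saturating $X_1$; symmetrically the second hypothesis yields a matching $M_2$ saturating $Y_1$. The remaining — and only substantial — step is to splice $M_1$ and $M_2$ into a single matching covering $X_1\cup Y_1$. In the subgraph $M_1\cup M_2$ every vertex has degree at most $2$, so each connected component is an isolated vertex, an edge in $M_1\cap M_2$, a path, or an even cycle, and along any path or cycle of length $\ge 2$ the edges alternate between $M_1$ and $M_2$. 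I build $M$ component by component. On an isolated vertex take nothing (such a vertex is unsaturated by the relevant $M_i$, hence lies in neither $X_1$ nor $Y_1$); on an edge of $M_1\cap M_2$ or an even cycle take the $M_1$-edges (either choice covers all vertices of that component). On a path $P$ of length $k\ge 1$: if $k$ is odd, whichever of the $M_1$-edges or the $M_2$-edges of $P$ contains the edge at an endpoint is a perfect matching of $P$, so take that set; if $k$ is even, both endpoints of $P$ lie on the same side of the bipartition, the $M_1$-edges of $P$ cover all of $P$ except one endpoint $u$, and the $M_2$-edges cover all except the other endpoint $u'$. Now if $u,u'\in X$ then $u$ is $M_1$-unsaturated in all of $G$, hence $u\notin X_1$ and so $u\notin X_1\cup Y_1$; take the $M_1$-edges of $P$. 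If instead $u,u'\in Y$ then $u'$ is $M_2$-unsaturated, hence $u'\notin Y_1$; take the $M_2$-edges of $P$. In every case the edges selected from $P$ cover each vertex of $P$ that lies in $X_1\cup Y_1$.

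Finally, the union $M$ of all selected edges is a matching (its pieces sit in vertex-disjoint components), uses only edges of $G$ and so pairs points at distance $<\varepsilon$, and saturates all of $X_1\cup Y_1$ by construction; defining $X_0,Y_0$ as above finishes the proof. I expect the merging step to be the crux: the real content is that a matching covering $X_1$ and a matching covering $Y_1$ can always be combined into one covering $X_1\cup Y_1$, and the single delicate point is the even-length path case, where bipartiteness is exactly what prevents an important endpoint from being dropped no matter which of $M_1,M_2$ one follows. A minor bookkeeping concern is to treat $X$ and $Y$ consistently as multisets, so that counting with multiplicity matches the number of saturated vertices.
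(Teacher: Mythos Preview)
The paper does not actually prove Lemma~\ref{2.5}; it attributes the result to Gong and cites \cite{EGLN}, so there is no ``paper's own proof'' to compare against. That said, your argument is correct and self-contained.

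What you have proved is essentially the Mendelsohn--Dulmage theorem in bipartite matching theory: given matchings $M_1$ and $M_2$ in a bipartite graph on $X\cup Y$, there exists a single matching $M$ saturating every $X$-vertex that $M_1$ saturates and every $Y$-vertex that $M_2$ saturates. Your alternating-path decomposition of $M_1\cup M_2$ is the standard proof, and your case analysis is accurate. The one place worth a careful second look is the even-length path with both endpoints in $X$: you need that the endpoint $u$ missed by the $M_1$-edges of $P$ is \emph{globally} $M_1$-unsaturated, not merely unsaturated within $P$. This does hold, because $u$ has degree~$1$ in $M_1\cup M_2$ and its unique incident edge there lies in $M_2\setminus M_1$; hence no $M_1$-edge meets $u$ anywhere, and $u\notin X_1$. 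You implicitly used this, and it is correct, but it is the only point where a reader might pause.

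Two minor remarks. First, your assumption that $X$ and $Y$ are finite multisets is justified by the surrounding context (Notation~1.3 and~2.4), though the lemma as stated omits it. Second, a reader might appreciate a one-line pointer that the merging step is exactly Mendelsohn--Dulmage, since this situates the argument in the matching literature and makes clear that nothing ad hoc is happening.
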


\begin{lemma} \label{2.6}
  Let $A\in \mathcal{C}$, for any $1>\varepsilon >0$ and $\eta=\frac{1}{m}$ where $m\in\mathbb{N}_+$, if $\phi,\psi:A\rightarrow M_n(\mathbb{C})$ are unital homomorphisms with the condition that $Eg(\phi(h))$ and $Eg(\varphi(h))$ can be paired to within $\frac{1}{4}$ one by one for all $h\in H(\eta)$, then for each $i\in \{1,2,\cdots,l\}$, then there exists $X_i\subset Sp\phi\cap(0,1)_i$, $Y_i\subset Sp\psi\cap(0,1)_i$
   with $X_i\supset Sp\phi\cap[\eta,1-\eta]_i$ , $Y_i\supset Sp\psi\cap[\eta,1-\eta]_i$ such that $X_i$ and $Y_i$ can be paired to within $2\eta$ one by one.
\end{lemma}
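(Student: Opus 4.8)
The plan is to deduce this directly from Gong's generalized pairing lemma (Lemma \ref{2.5}), applied separately in each interval $(0,1)_i$ with the dictionary $X=Sp\phi\cap(0,1)_i$, $Y=Sp\psi\cap(0,1)_i$, $X_1=Sp\phi\cap[\eta,1-\eta]_i$, $Y_1=Sp\psi\cap[\eta,1-\eta]_i$ and $\varepsilon=2\eta$. If $A$ is finite dimensional there is nothing to prove, so assume $A=A(F_1,F_2,\varphi_0,\varphi_1)$ is a genuine building block. Once the two covering inequalities required by Lemma \ref{2.5} are verified, the sets $X_0\supset X_1$, $Y_0\supset Y_1$ it produces (paired to within $2\eta$) are precisely the desired $X_i$, $Y_i$. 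Since the hypothesis on $\phi,\psi$ and the conclusion are symmetric in $\phi\leftrightarrow\psi$, it is enough to establish
$$
\#\big\{z\in Sp\psi\cap(0,1)_i:\ dist(z,\widetilde{X})<2\eta\big\}\ \geq\ \#(\widetilde{X})\qquad\text{for every }\widetilde{X}\subset Sp\phi\cap[\eta,1-\eta]_i .
$$

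The mechanism for this is to feed an appropriate type-2 test function into the hypothesis. Fix such an $\widetilde{X}$. By the remark at the end of the definition of $H(\eta)$, choose a closed set $X\subset[\eta,1-\eta]_i$ which is a finite union of partition intervals $[x_r,x_{r+1}]$ with $X\supset\widetilde{X}$ and $dist(x,\widetilde{X})\leq\eta$ for all $x\in X$. Let $h=h_X\in H(\eta)$ be the corresponding test function of type 2: $h=(f,0)$ with $f(\cdot,r)\equiv 0$ for $r\neq i$, and $f(t,i)=e_{11}^i\big(1-dist(t,X)/\eta\big)e_{11}^i$ when $dist(t,X)<\eta$, $f(t,i)=0$ otherwise. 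Writing $\phi$ in its normal form, $Eg(\phi(h))$ consists of a block of zeros (coming from the $a(\theta_j)$-summands and from the points of $Sp\phi$ lying in intervals $(0,1)_r$ with $r\neq i$) together with the numbers $\max\{0,\,1-dist(y,X)/\eta\}$, one for each $y\in Sp\phi\cap(0,1)_i$ counted with multiplicity. In particular the eigenvalue $1$ of $\phi(h)$ occurs with multiplicity exactly $\#(Sp\phi\cap X\cap(0,1)_i)$, and this is $\geq\#(\widetilde{X})$ because $\widetilde{X}\subset Sp\phi\cap X\cap(0,1)_i$ as sets with multiplicity.

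Now apply the hypothesis to $h$: since $Eg(\phi(h))$ and $Eg(\psi(h))$ pair to within $\tfrac14$ one by one, each of the (at least $\#(\widetilde{X})$) eigenvalues of $\phi(h)$ equal to $1$ is matched with a \emph{distinct} eigenvalue of $\psi(h)$ exceeding $1-\tfrac14=\tfrac34$; hence $Eg(\psi(h))$ has at least $\#(\widetilde{X})$ eigenvalues $>\tfrac34$ (with multiplicity). Running the same eigenvalue computation for $\psi$, every positive eigenvalue of $\psi(h)$ has the form $1-dist(z,X)/\eta$ for some $z\in Sp\psi\cap(0,1)_i$, so for each of these we get $dist(z,X)<\eta/4$; therefore $\#\{z\in Sp\psi\cap(0,1)_i:\ dist(z,X)<\eta/4\}\geq\#(\widetilde{X})$. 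Finally, if $dist(z,X)<\eta/4$, pick $x\in X$ with $dist(z,x)<\eta/4$ and $w\in\widetilde{X}$ with $dist(x,w)\leq\eta$, so $dist(z,\widetilde{X})\leq dist(z,x)+dist(x,w)<\tfrac54\eta<2\eta$. This gives the displayed inequality; the symmetric one (with $\phi$ and $\psi$ interchanged) follows verbatim, and Lemma \ref{2.5} then delivers $X_i\supset Sp\phi\cap[\eta,1-\eta]_i$ and $Y_i\supset Sp\psi\cap[\eta,1-\eta]_i$, contained in $Sp\phi\cap(0,1)_i$ and $Sp\psi\cap(0,1)_i$ respectively, paired to within $2\eta$.

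The steps that demand care rather than cleverness are the precise description of $Eg(\phi(h))$ from the normal form of $\phi$ — so that "eigenvalue $=1$" genuinely detects $Sp\phi\cap X$ in the $i$-th interval and nothing else — and the triangle-inequality bookkeeping that converts $\eta/4$-closeness to the thickened set $X$ into $2\eta$-closeness to $\widetilde{X}$; keeping all counts with multiplicity throughout is the only real pitfall. The one substantive ingredient is Gong's pairing lemma, which does the combinatorial work of assembling the local covering inequalities into a single one-to-one pairing; I do not expect any obstacle beyond that.
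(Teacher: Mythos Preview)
Your proof is correct and follows essentially the same approach as the paper's: both set up Lemma~\ref{2.5} with $X=Sp\phi\cap(0,1)_i$, $X_1=Sp\phi\cap[\eta,1-\eta]_i$ (and symmetrically for $\psi$), thicken an arbitrary $\widetilde{X}\subset X_1$ to a union $X'$ of partition subintervals, and use the associated type-2 test function to convert the eigenvalue-pairing hypothesis into the required covering inequality. Your version spells out the eigenvalue computation and the triangle-inequality bookkeeping more explicitly than the paper (which simply asserts the inequality once $h|_{X'}=1$ and $\mathrm{supp}(h)\subset X''$ are noted), but the argument is the same.
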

\begin{proof}
  We will apply lemma \ref{2.5} to the set $X\triangleq Sp\phi\cap(0,1)_i$,\ $X_1\triangleq Sp\phi\cap[\eta,1-\eta]_i$ \ and
  $Y\triangleq Sp\psi\cap(0,1)_i$,\ $Y_1\triangleq Sp\psi\cap[\eta,1-\eta]_i$.

  For any subset $\widetilde{X}\subset X_1$, let
  $$
  X'=\bigcup\{\ [r_s\eta,r_{s+1}\eta]_i\,|\, \widetilde{X}\cap[r_s\eta,r_{s+1}\eta]_i \neq \varnothing \}
  $$
  and
  $$
  X''=\{\ x\in [0,1]_i\,|\, dist(x,X')\leq\eta\}.
  $$
  Let $h\in H(\eta)$ corresponding to $X'$, then $h|_{X'}=1$ and $support(h)\subset X''$. Since $Eg(\phi(h))$ and $Eg(\psi(h))$ can be paired within $\varepsilon$, then we will have
  $$
  \#\big \{y\in Y,\,dist(y,\widetilde{X})<2\eta\big \}\geq\#(\widetilde{X}).
  $$
  Similarly for any subset $\widetilde{Y}\subset Y_1$,we also have
  $$
  \#\big \{x\in X,\,dist(x,\widetilde{Y})<2\eta\big \}\geq\#(\widetilde{Y}).
  $$
  Then there exists $X_0\subset X$ ,$Y_0\subset Y$ with $X_0\supset X_1$, $Y_0\supset Y_1$ such that $X_0$ and $Y_0$ can be paired one by one to within $2\eta$.

 \end{proof}
 \begin{lemma} \label{2.7}
   Let $A,B\in \mathcal{C}$, $\phi: A\rightarrow B$ be a unital homomorphism, $V\subset (0,1)_i $ is a closed interval, $W_j$ is a closed neighborhood of $X_j=\{\theta_j\}$, then for any $x_0\in [0,1]$, there exists $\eta,\delta>0$ such that for all $x\in B(x_0,\delta)$,

   (1)$\#\big (Sp\phi_x\cap B(V,2\eta)\big)=\#\big (Sp\phi_{x_0}\cap V\big)$,\,\,$Sp\phi_x\cap \overline{B(V,3\eta)}\backslash B(V,2\eta)=\varnothing$;

   (2)$\#_{X_j}\big(Sp\phi_{x}\cap \overline{B(W_j,2\eta)}\big)=\#_{X_j}\big(Sp\phi_{x_0}\cap W_j\big),\,\,
   Sp\phi_x\cap \overline{B(W_j,3\eta)}\backslash B(W_j,2\eta)=\varnothing$.

 \end{lemma}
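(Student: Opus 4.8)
Here is the line of attack I would take.

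\medskip
\noindent\emph{Set-up.} Fix the interval $[0,1]_{i'}$ of $Sp(B)$ in which $x$ is meant to range, and write $\phi_x\colon A\to M_N(\mathbb{C})$, $N=l'_{i'}$, for the homomorphism obtained by composing $\phi$ with evaluation of the $i'$-th interval coordinate of $B$ at $x$. Then $x\mapsto\phi_x$ is a norm-continuous path of (unital) homomorphisms. I will reduce both statements to two soft facts: (a) spectral points do not appear out of nowhere as $x$ moves (an accumulation point of spectral points of $\phi_{x_n}$, $x_n\to x_0$, that lies in the interior of some interval must itself lie in $Sp\phi_{x_0}$); and (b) for a suitable element $g\in A$, $\operatorname{Tr}\phi_x(g)$ is continuous in $x$ and, once the relevant annuli carry no spectrum, takes values in a fixed discrete subset of $\mathbb{R}$, hence is locally constant on the connected set $B(x_0,\delta)$. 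I treat $V$ and $W_j$ at the same time, taking the final $\eta,\delta$ to be the smaller of the two choices made for them.

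\medskip
\noindent\emph{Step 1: choice of scale.} Since $Sp\phi_{x_0}$ is finite, pick $\eta>0$ (which we may take of the form $1/m$) so small that $\overline{B(V,3\eta)}\subset(0,1)_i$, that the only points of $Sp\phi_{x_0}$ within distance $3\eta$ of $V$ (resp.\ of $W_j$) are those already lying in $V$ (resp.\ in $W_j$), and consequently
$$\#\big(Sp\phi_{x_0}\cap\overline{B(V,3\eta)}\big)=\#\big(Sp\phi_{x_0}\cap V\big)\quad\text{and}\quad\#_{X_j}\big(Sp\phi_{x_0}\cap\overline{B(W_j,3\eta)}\big)=\#_{X_j}\big(Sp\phi_{x_0}\cap W_j\big),$$
while $Sp\phi_{x_0}$ misses both $\overline{B(V,3\eta)}\setminus B(V,2\eta)$ and $\overline{B(W_j,3\eta)}\setminus B(W_j,2\eta)$. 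Note that these two annuli are compact and, being at distance $\ge 2\eta$ from the identified endpoints $0_r,1_r$, lie in the open intervals $\coprod_r(0,1)_r$.

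\medskip
\noindent\emph{Step 2: the emptiness assertions (the second equalities in (1) and (2)).} If no $\delta$ worked there would be $x_n\to x_0$ and $z_n\in Sp\phi_{x_n}$ inside one of these annuli, say $\overline{B(V,3\eta)}\setminus B(V,2\eta)$; after passing to a subsequence $z_n\to z_\ast$ in that (closed) annulus, so $z_\ast$ is an interior point with $z_\ast\notin Sp\phi_{x_0}$ by Step 1. Choose $g=(g,0)\in A_+$ with $g$ a scalar bump supported in a tiny interior neighbourhood of $z_\ast$, $\|g(z_\ast)\|=1$, vanishing on the finite set $Sp\phi_{x_0}$ and at all endpoints (so that $(g,0)$ really lies in $A$). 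Then $\phi_{x_0}(g)=0$, hence $\|\phi_{x_n}(g)\|\to 0$ by continuity of $x\mapsto\phi_x(g)$; but $z_n\in Sp\phi_{x_n}$ forces $\|\phi_{x_n}(g)\|\ge\|g(z_n)\|\to\|g(z_\ast)\|=1$, a contradiction. The $W_j$ case is identical (the annulus there is again interior). So there is $\delta>0$ with $Sp\phi_x\cap\big(\overline{B(V,3\eta)}\setminus B(V,2\eta)\big)=\varnothing$ and $Sp\phi_x\cap\big(\overline{B(W_j,3\eta)}\setminus B(W_j,2\eta)\big)=\varnothing$ for all $x\in B(x_0,\delta)$.

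\medskip
\noindent\emph{Step 3: the counting identities, and the main difficulty.} For (1), take $h=(h,0)\in A_+$ with $h(\cdot,i)=\lambda(\cdot)I_{l_i}$ and $h(\cdot,r)=0$ for $r\ne i$, where $\lambda\colon[0,1]\to[0,1]$ is continuous, $\equiv 1$ on $B(V,2\eta)$, $\equiv 0$ off $\overline{B(V,3\eta)}$, and $\lambda(0)=\lambda(1)=0$. Reading the eigenvalue list of $\phi_x(h)$ off $Sp\phi_x$ and using that for $x\in B(x_0,\delta)$ no spectral point sits in the $V$-annulus, we get $\operatorname{Tr}\phi_x(h)=l_i\cdot\#\big(Sp\phi_x\cap B(V,2\eta)\big)$; the left side is continuous in $x$ and lies in $l_i\mathbb{Z}$ on the connected set $B(x_0,\delta)$, hence is constant, giving $\#\big(Sp\phi_x\cap B(V,2\eta)\big)=\#\big(Sp\phi_{x_0}\cap V\big)$. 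For (2), repeat with $g$ the test function of type $1$ (the construction used to define $H(\eta)$) attached to $X_j=\{\theta_j\}$ and to the thickened neighbourhood $\overline{B(W_j,2\eta)}$: by construction $\operatorname{Tr}\phi_x(g)$ weights each copy of $\theta_j$ by $1$, each $y_k\in Sp\phi_x$ in the $0$-plateau of $\overline{B(W_j,2\eta)}$ by $\alpha_{ij}$, and each $y_k$ in the $1$-plateau by $\beta_{ij}$, while the transition regions of $g$ are contained in the $W_j$-annulus and hence are free of spectrum for $x\in B(x_0,\delta)$; thus $\operatorname{Tr}\phi_x(g)=\#_{X_j}\big(Sp\phi_x\cap\overline{B(W_j,2\eta)}\big)$ is an integer-valued continuous function on $B(x_0,\delta)$, so it is constant and equals $\#_{X_j}\big(Sp\phi_{x_0}\cap W_j\big)$. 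I expect the only real care needed is in Step 2 and in setting up $h$ and $g$: one must keep track of the identification of the endpoints $0_r,1_r$ with points of $Sp(F_1)$ inside $Sp(A)$, both when asserting that an accumulation point of spectral points is again a spectral point (this is exactly why the bump $g$ must be a genuine element of $A$, vanishing compatibly at those endpoints) and when checking that the thickened annuli for $V$ and for $W_j$ remain in the interiors of the intervals; granting that, everything reduces to norm-continuity of $x\mapsto\phi_x$ and connectedness of $B(x_0,\delta)$.
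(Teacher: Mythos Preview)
Your argument is correct and takes a genuinely different route from the paper's. The paper proceeds via the test-function and pairing machinery already built up in Section~2: it picks $\eta$ so that $5\eta$ separates the points of $Sp\phi_{x_0}$ from one another and from the boundaries of $V$ and $W_j$; then by continuity chooses $\delta$ with $\|\phi_x(h)-\phi_{x_0}(h)\|<1$ for every $h\in H(\eta)$, applies the Weyl spectral variation inequality to pair the eigenvalue lists, feeds that into Lemma~\ref{2.6} to pair the interior spectra of $\phi_x$ and $\phi_{x_0}$ to within $2\eta$, and reads off the sandwich
\[
\#(Sp\phi_{x_0}\cap V)\le\#(Sp\phi_x\cap B(V,2\eta))\le\#(Sp\phi_x\cap\overline{B(V,3\eta)})\le\#(Sp\phi_{x_0}\cap\overline{B(V,5\eta)})=\#(Sp\phi_{x_0}\cap V),
\]
with the analogous chain for $W_j$. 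You instead bypass Lemma~\ref{2.5}/\ref{2.6} entirely: a compactness-and-bump-function contradiction clears the annuli, and then continuity plus integrality of a single trace pins down the counts. Your route is more elementary and self-contained; the paper's version has the virtue of exercising precisely the pairing machinery that is reused in the proof of Theorem~\ref{dc}, so its choice fits the internal economy of the paper even if your argument is cleaner in isolation.
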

 \begin{proof}
  Let $\phi_{x_0}:=\pi_{x_0}\circ \phi $. We will denote $Sp\phi_{x_0}$ by
  $$
  Sp\phi_{x_0}=\{\theta_1^{\sim t_1(x_0)},\theta_2^{\sim t_2(x_0)},\cdots,\theta_p^{\sim t_p(x_0)},y_1^1,y_2^1,\cdots,y_{\bullet}^1,\cdots,y_1^l,y_2^l,\cdots,y_{\bullet}^l\}
  $$
  where $y_1^i,y_2^i,\cdots,y_{\bullet}^i\in (0,1)_i$, for $i=1,2,\cdots,l$.

  Let
  $$
  \eta_1=min\{dist(y_r^i,y_s^i),\,y_r^i\neq y_s^i,\, for\ all\ possible \,r,s \, \},
  $$
  $$
  \eta_2=min\{dist(0_i,V),\,dist(1_i,V),\,dist(y_r^i,V),\,y_r^i\notin V \},
  $$
  $$
  \eta_3=min\{dist(y_r^i,W_j),\,y_r^i\notin W_j \}.
  $$
  There exists a large enough integer $m$ such that
  $\frac{1}{m}<\frac{1}{5}min\{\eta_1,\eta_2,\eta_3\}$, set $\eta=\frac{1}{m}$, then we
  will construct a finite subset $H(\eta)\subset A_+$, by the continuity of $\phi$, there exists $\delta>0$ such that
  $$
  \|\phi_{x}(h)-\phi_{x_0}(h)\|<1,
  $$
  for all $h\in H(\eta)$, $x\in B(x_0,\delta)$.

  Then by the Weyl spectral variation inequality \cite{B}, $Eg(\phi_x(h))$ and $Eg(\phi_{x_0}(h))$ can be paired to within 1 one by one.
  By lemma \ref{2.6},  there exist $X_i(x)\subset Sp\phi_{x}\cap(0,1)_i$, $Y_i(x_0)\subset Sp\phi_{x_0}\cap(0,1)_i$
  with $X_i(x)\supset Sp\phi_{x}\cap[\eta,1-\eta]_i$ , $Y_i(x_0)\supset Sp\phi_{x_0}\cap[\eta,1-\eta]_i$ such that
  $X_i(x)$ and $Y_i(x_0)$ can be paired to within $2\eta$ one by one.

  From the construction of $\eta$, we have $V\subset [5\eta,1-5\eta]_i$, and from the pairing results, we have
  \begin{eqnarray*}
    \#\big (Sp\phi_{x_0}\cap V\big) &\leq& \#\big (Sp\phi_x\cap B(V,2\eta)\big) \\
     &\leq & \#\big(Sp\phi_{x}\cap \overline{B(V,3\eta)}\big) \\
     &=& \#\big(Sp\phi_{x_0}\cap \overline{B(V,5\eta)}\big)
  \end{eqnarray*}
  Similarly,
  \begin{eqnarray*}
    \#_{X_j}\big (Sp\phi_{x_0}\cap W_j\big) &\leq&\#_{X_j}\big (Sp\phi_x\cap B(W_j,2\eta)\big) \\
     &\leq & \#_{X_j}\big(Sp\phi_{x}\cap \overline{B(W_j,3\eta)}\big) \\
     &=& \#_{X_j}\big(Sp\phi_{x_0}\cap \overline{B(W_j,5\eta)}\big)
  \end{eqnarray*}
  holds for any $x\in [0,1]$.

  Since
  $$
  \#\big(Sp\phi_{x_0}\cap V\big)=\#\big(Sp\phi_{x_0}\cap \overline{B(V,5\eta)}\big),
  $$
  $$
  \#_{X_j}\big(Sp\phi_{x_0}\cap W_j\big)=\#_{X_j}\big(Sp\phi_{x_0}\cap \overline{B(W_j,5\eta)}\big).
  $$

  It is obvious that
  $$
  \#\big(Sp\phi_x\cap B(V,2\eta)\big)=\#\big(Sp\phi_{x_0}\cap V\big),\,\,
  Sp\phi_x\cap \overline{B(V,3\eta)}\backslash B(V,2\eta)=\varnothing
  $$
  and
  $$
  \#_{X_j}\big(Sp\phi_{x}\cap \overline{B(W_j,2\eta)}\big)=\#_{X_j}\big(Sp\phi_{x_0}\cap W_j\big),\,\,
   Sp\phi_x\cap \overline{B(W_j,3\eta)}\backslash B(W_j,2\eta)=\varnothing
  $$
  hold for all $x\in B(x_0,\delta)$.
 \end{proof}


\section{ Decomposition theorem }

    The following lemma is Proposition 3.2 of \cite{DNNP}.
 \begin{lemma} \label{3.1}
  Let $X$ be a Hausdorff compact space ,let $k'> k\geq 1$ be integers ,let $\mathcal{W}$ be an open cover of $X$, and assume that for each $W\in \mathcal{W}$ there is given a continuous projection valued map
  $p_W:W\rightarrow M_n$ such that ${\rm rank}\ p_W\geq k'$ for $x\in W$. If dim$(X)\leq2(k'-k)-1$, then there is a continuous projection valued map $p:X\rightarrow M_n$ such that for $x\in X$:
  $$
  {\rm rank}\ p(x)\geq k,
  $$
  $$
  p(x)\leq\bigvee\{p_W(x):W\in \mathcal{W},x\in W\}.
  $$
\end{lemma}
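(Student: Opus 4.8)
\emph{Overview and reduction.} The plan is to reduce to a finite cover, isolate the one stable‑range fact supplied by the dimension hypothesis, and then assemble $p$ by induction over the cover. Since $X$ is compact, replace $\mathcal W$ by a finite subcover $W_1,\dots,W_s$ and choose open sets $U_i,V_i$ with $\overline{U_i}\subset V_i\subset\overline{V_i}\subset W_i$ and $\bigcup_iU_i=X$; write $q_i:=p_{W_i}$, so $\mathrm{rank}\,q_i\ge k'$ on $W_i$. Each $q_i$ has locally constant rank, so over any compact subset of $W_i$ the range of $q_i$ is an honest complex vector bundle.

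\emph{The geometric input.} The hypothesis $\dim X\le 2(k'-k)-1$ enters only through the following. Let $Z$ be compact Hausdorff with $\dim Z\le 2(k'-k)-1$, let $q:Z\to M_n$ be a continuous projection of constant rank $m\ge k'$, and call a continuous $e\le q$ \emph{good} if $\mathrm{rank}\,e=k$ and $[e]=0$ in $K^0(Z)$ (equivalently, $e$ is the range projection of a bundle monomorphism $\theta^k_Z\hookrightarrow\mathrm{ran}\,q$). Then: (i) good projections exist; (ii) a good projection on a closed $Z_0\subset Z$ extends to one on $Z$; (iii) any two good projections are homotopic through good ones, and the same holds for an ambient $q$ of rank $\ge k'$ over $Z\times[0,1]$. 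These are obstruction‑theoretic: the relevant fibre is the complex Stiefel manifold $V_k(\mathbb C^m)$ of orthonormal $k$‑frames, which is $2(m-k)$‑connected, and $2(m-k)\ge 2(k'-k)>\dim X\ge\dim Z$. (The classical form is that a complex bundle of rank exceeding $k-1+\tfrac12\dim Z$ splits off a trivial rank‑$k$ summand, together with its relative and one‑parameter refinements; over a compact Hausdorff base one argues via {\v C}ech theory or polyhedral approximation.)

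\emph{The induction.} I construct, for $j=1,\dots,s$, a continuous projection $p_j$ of rank $k$ and trivial class on an open neighbourhood $\Omega_j$ of $\overline{U_1}\cup\cdots\cup\overline{U_j}$ with $\Omega_j\subset W_1\cup\cdots\cup W_j$ and $p_j(x)\le\bigvee\{\,q_i(x):i\le j,\ x\in W_i\,\}$ for all $x\in\Omega_j$; then $p:=p_s$ works since $\bigcup_iU_i=X$. For $j=1$, take a good subprojection of $q_1$ over $\overline{V_1}$ by (i) and restrict to $V_1$. For the step $j\mapsto j+1$ one localizes near $D:=\overline{U_{j+1}}\subset V_{j+1}$: outside a neighbourhood of $D$ keep $p_{j+1}=p_j$ (consistent, as $\bigvee_{i\le j}\le\bigvee_{i\le j+1}$); on an inner neighbourhood of $D$ put $p_{j+1}$ equal to a good subprojection $e$ of $q_{j+1}$ (which satisfies the constraint there automatically, such points lying in $W_{j+1}$); and on the collar between these regions — a compact set of dimension $\le 2(k'-k)-1$ on which both $p_j$ (which lies under $\bigvee_{i\le j}$, hence under $\bigvee_{i\le j+1}$) and $q_{j+1}$ are available — one deforms $p_j$ to $e$, in the spirit of (iii), through rank‑$k$ projections staying under $\bigvee_{i\le j+1}\{q_i(x):x\in W_i\}$; this collar deformation is where the real work lies. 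A Urysohn function supported on the collar splices the three prescriptions into one continuous projection, and one checks $[p_{j+1}]=0$ (it is locally $0$).

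\emph{The main obstacle.} The delicate point is precisely this collar deformation, and its source is that $\bigvee\{q_i(x):x\in W_i\}$ — the projection in the conclusion — is only lower semicontinuous: its rank jumps up exactly where $x$ enters a new member of the cover, so it is not the fibre of any bundle and cannot be fed directly into a selection or homotopy theorem. The remedy is never to treat that join as an ambient space. Near a point only the finitely many $q_i$ with $x\in W_i$ matter, and that fixed finite family \emph{is} continuous; so one carries $p_{j+1}$ on the collar as the range projection of an explicit continuous $k$‑frame whose columns are, vector by vector, combinations of sections of the relevant $\mathrm{ran}\,q_i$'s — making $\mathrm{ran}\,p_{j+1}(x)\subseteq\sum_i\mathrm{ran}\,q_i(x)$ hold on the nose — and reduces the construction of that frame (interpolating between one adapted to $p_j$ and one adapted to $e\le q_{j+1}$, with the rank never dropping below $k$) to the obstruction‑theoretic input (iii), applied now to the genuine bundle $\mathrm{ran}\,q_i\oplus\mathrm{ran}\,q_{j+1}$ before pushing forward by the addition map; keeping that pushforward non‑degenerate is a generic‑position argument valid in this dimension range. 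Arranging the nested neighbourhoods so that at each point only the already‑processed members of the cover participate is the bookkeeping that makes the induction go through — routine, but to be done with care.
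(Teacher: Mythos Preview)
The paper does not prove this lemma at all: it is quoted verbatim as Proposition~3.2 of \cite{DNNP} and used as a black box. So there is no in-paper argument to compare against; your proposal is an attempt to supply what the paper merely cites.

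Your overall strategy---reduce to a finite cover, invoke the $2(m-k)$-connectivity of the complex Stiefel manifold $V_k(\mathbb{C}^m)$ to get existence/extension/homotopy of trivial rank-$k$ subbundles, and splice over an induction on the cover---is the right one and is indeed how the DNNP argument runs. The identification of the lower-semicontinuity of $\bigvee_W p_W$ as the central difficulty is exactly correct.

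Where your write-up is thin is the collar step. You want to deform $p_j$ (sitting under $\bigvee_{i\le j}q_i$) to $e\le q_{j+1}$ through rank-$k$ projections under $\bigvee_{i\le j+1}q_i$. Your plan is to lift to a frame in $\mathrm{ran}\,q_i\oplus\mathrm{ran}\,q_{j+1}$ and push forward by addition, invoking a ``generic-position argument'' to keep the pushforward injective. Two issues: first, $p_j$ need not lie under any single $q_i$ on the collar---it only lies under the join of several of them---so the ambient bundle you feed into (iii) must be $\bigoplus_{i\in S}\mathrm{ran}\,q_i$ for a locally chosen finite $S$, and you should say why this can be done coherently across the collar. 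Second, the addition map $\bigoplus_i\mathrm{ran}\,q_i(x)\to\mathbb{C}^n$ has a kernel, and ``generic position'' for a $k$-frame to miss that kernel is itself an obstruction problem; it works because the relevant fibre (frames avoiding a fixed subspace) has the same connectivity as $V_k(\mathbb{C}^m)$ in this dimension range, but that deserves a sentence rather than a parenthetical. These are fixable, and once filled in your argument is complete; as written, the collar paragraph is a correct plan rather than a proof.
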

\begin{notation}\rm
  Let $A=A(F_1,F_2,\varphi_0,\varphi_1)\in\mathcal{C}$ be a minimal block, $p\in A$ be a projection, we will take $p$ as a
  projection in $C([0,1],M_{L(A)}(\mathbb{C}))$, where $L(A)=\sum_{i=1}^{l}l_i$, then  rank $p\in \mathbb{N}$.

  For a projection $p\in A=\oplus_{k=1}^{n}A_k$, where $A_1,A_2,\cdots,A_n\in \mathcal{C}$ are minimal blocks, let
   $\sigma_k:A\rightarrow A_k$ be a homomorphism which maps the direct sum to the $k^{th}$ block in a natural way, define
  $$
  {\rm rank}\,\,p=({\rm rank}\,\,\sigma_1(p),{\rm rank}\,\,\sigma_2(p),\cdots,{\rm rank}\,\,\sigma_n(p))\in \mathbb{N}^n,
  $$
  where  we take $\sigma_k(p)$  as a projection in $C([0,1],M_{L(A_k)}(\mathbb{C}))$. Let $p,q\in A$ be two projections,
  we say that ${\rm rank}\,\,p\geq{\rm rank}\,\,q$, if ${\rm rank}\,\,\sigma_k(p)\geq{\rm rank}\,\,\sigma_k(q)$ holds for all $i\in \{1,2,\cdots,l\}$.
\end{notation}

\begin{theorem} \label{dc}
  Let $A=A(F_1,F_2,\varphi_0,\varphi_1),B=B(F_1',F_2',\varphi_0',\varphi_1')\in \mathcal{C}$, assume that $F_2'$ has only one block. Let
  $G\subset A$ be a finite set, for any positive integers $J,L$, there exists $\eta>0$ such that if a unital homomorphism $\phi: A\rightarrow B$ satisfies $\phi(H(\eta))\subset_{\frac{1}{6}}\{ f\in B\, |f\,has\,finite\,spectrum\}$, then there exists a projection $q\in B$
  and a unital homomorphism $\psi: A\rightarrow (1-q)B(1-q)$ with finite dimensional image such that

  $(1)\,\,L\cdot{\rm rank}\,\,(\phi(e)-\psi(e))<{\rm rank}\,\,\psi(e)$ for any projection $e\in A$,

  $(2)\,\,\|q\phi(g)-\phi(g)q\|<\dfrac{4}{J} \,\,for\, \,any\,\, g\in G$,

  $(3)\,\,\|\phi(g)-q\phi(g)q\oplus\psi(g)\|<\dfrac{4}{J} \,\,for\,\,any\,\, g\in G$.
\end{theorem}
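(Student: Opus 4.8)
The plan is to exploit the hypothesis that $\phi(H(\eta))$ is within $\tfrac16$ of finite-spectrum elements to deduce, via Lemma~\ref{2.6} applied fibrewise, that the spectrum $Sp\phi_x$ varies ``discretely'' in the sense of Lemma~\ref{2.7}, and then to build a projection $q$ supported on a small neighborhood of the ``moving'' part of the spectrum, while the complementary corner $(1-q)B(1-q)$ absorbs a homomorphism $\psi$ that evaluates only at finitely many fixed points. First I would fix $J,L$ and the finite set $G$; by uniform continuity of the elements of $G$ (regarded as functions on $Sp(A) = Sp(F_1)\cup\coprod_i(0,1)_i$) choose $\eta=\tfrac1m$ small enough that any two points of $Sp(A)$ within $5\eta$ of each other are not separated by more than $\tfrac1J$ by any $g\in G$. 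The finite-spectrum condition on $\phi(H(\eta))$ together with the Weyl spectral variation inequality \cite{B} gives, for each fibre map $\phi_x=\pi_x\circ\phi$ (here using that $F_2'$ has a single block, so $B\subset C([0,1],M_{L(B)}(\mathbb C))$ and each $\pi_x\circ\phi:A\to M_{L(B)}(\mathbb C)$ is an honest homomorphism), that $Eg(\phi_x(h))$ is within $\tfrac16+\tfrac16<\tfrac14$ of $Eg(\phi_{x'}(h))$ for $x,x'$ in a common subinterval of the partition; Lemma~\ref{2.6} then pairs the open-interval spectra of $\phi_x$ and $\phi_{x'}$ to within $2\eta$, and Lemma~\ref{2.7} upgrades this to the statement that the multiplicities $\#(Sp\phi_x\cap B(V,2\eta))$ and $\#_{X_j}(Sp\phi_x\cap B(W_j,2\eta))$ are locally constant in $x$.

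Next I would isolate the ``large, stable'' part of the spectrum. Away from the finitely many special points $\theta_1,\dots,\theta_p$ and away from a controlled finite set of interval points where multiplicity jumps, $Sp\phi_x$ consists of points that persist on a fixed subinterval; by a compactness/partition-of-unity argument over the cover of $[0,1]$ by the $\delta$-balls from Lemma~\ref{2.7}, I would assemble a continuous projection-valued function $p$ on $[0,1]$ (the complement of $q$, roughly) whose rank at each $x$ counts exactly the spectral mass of $\phi_x$ lying in the ``fixed'' region, using Lemma~\ref{3.1} to glue the locally-defined projections $p_W$ — here the dimension hypothesis $\dim[0,1]=1\le 2(k'-k)-1$ is what forces the choice of $\eta$ to be small enough that the stable rank $k'$ exceeds the target rank $k$ by at least $1$, and this is where the quantifier $J,L$ enters: we need the stable part to dominate the moving part by a factor $L$, so $\eta$ must be small enough that the moving spectrum in any $2\eta$-window is a $1/(L+1)$-fraction of the total. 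The homomorphism $\psi$ is then defined on the fixed region: $\psi(f,a)$ evaluates $(f,a)$ at the fixed spectral points (the $\theta_j$'s and the persistent interval points), tensored against the matrix units making up $p$, exactly as in Notation~1.11; compatibility of $\psi$ with the boundary conditions $f(0)=\varphi_0(a),f(1)=\varphi_1(a)$ is automatic since we are only reading off genuine points of $Sp(A)$.

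Finally I would verify (1)--(3). For (1): by construction $\mathrm{rank}\,\psi(e)$ counts the stable spectral mass and $\mathrm{rank}(\phi(e)-\psi(e))$ the moving mass, and the choice of $\eta$ guarantees $L\cdot(\text{moving})<(\text{stable})$ fibrewise, hence for every projection $e\in A$. For (2) and (3): $q=\phi(1_A)-p\cdot\phi(1_A)\cdots$ — more precisely, cutting $\phi(g)$ by $p$ and by $1-p$, the off-diagonal terms $p\phi(g)(1-p)$ are small because $p$ is, up to $2\eta$, spectrally supported on the moving region while $\psi(g)$ lives on the fixed region, and on each fibre the eigenvalue list of $\phi_x(g)$ restricted to the range of $p_x$ differs from $g$ evaluated at the nearby fixed points by at most $\tfrac1J$ (by the uniform-continuity choice of $\eta$); a standard estimate then yields $\|\phi(g)-q\phi(g)q\oplus\psi(g)\|<\tfrac4J$ and $\|q\phi(g)-\phi(g)q\|<\tfrac4J$. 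The main obstacle I anticipate is the gluing step: ensuring that the fibrewise projections $p_W$ assemble into a globally continuous $p$ on $[0,1]$ with the correct rank \emph{and} respecting the endpoint identifications built into $A$ and $B$ — this is precisely what Lemma~\ref{3.1} is designed to handle, but arranging the hypothesis $\mathrm{rank}\,p_W\ge k'$ uniformly, with $k'-k$ large enough to beat $\dim[0,1]=1$ while simultaneously controlling the $L$-domination in (1), is the delicate bookkeeping that dictates how small $\eta$ must be.
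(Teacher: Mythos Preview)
Your overall strategy matches the paper's---test functions $H(\eta)$ plus the Weyl inequality to compare eigenvalue lists fibrewise, Lemmas~\ref{2.6} and~\ref{2.7} for spectral bookkeeping, and Lemma~\ref{3.1} to glue local projections---but there is a genuine missing idea in how you arrange condition~(1). You write that ``$\eta$ must be small enough that the moving spectrum in any $2\eta$-window is a $1/(L+1)$-fraction of the total,'' but shrinking $\eta$ alone cannot achieve this: nothing prevents $Sp\phi_x$ from concentrating precisely in the windows you discard. The paper instead fixes a \emph{base point} $x_0\in(0,1)\subset Sp(B)$ and runs a pigeonhole construction at that single fibre: partition each $[0,1]_i$ into $K$ pieces (with $K$ chosen from the modulus of continuity of $G$), subdivide each piece into $L+1$ equal subintervals, and throw away the one carrying the fewest points of $Sp\phi_{x_0}$. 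The surviving closed intervals $V_0^i,\dots,V_K^i$ then carry at least an $L/(L+1)$-fraction of $Sp\phi_{x_0}\cap(0,1)_i$, and $\eta=1/(8K(L+1))$ is chosen only \emph{after} this, to separate the $V_r^i$. The finite-spectrum hypothesis then propagates these counts from $x_0$ to every fibre. Without this base-point pigeonhole step your rank estimate for~(1) has no foundation.

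A second gap is the mechanism for applying Lemma~\ref{3.1}. You tie the rank surplus $k'-k\ge1$ to the smallness of $\eta$, but the paper's device is a two-case split on the approximating eigenvalue list $\Lambda(h_r^i)$: if the intermediate eigenvalues $\Lambda_2\cup\Lambda_3$ (those in $(\tfrac16,\tfrac56]$) are empty, a central bump function $\chi_r^i\in A$ already has $\phi(\chi_r^i)$ a projection in $B$, and no gluing is needed; if $\Lambda_2\cup\Lambda_3\neq\varnothing$, that extra eigenvalue supplies exactly the one unit of rank surplus the selection principle needs over a one-dimensional base. You also do not address the paper's Step~3: the boundary intervals $V_0^i,V_K^i$ cannot be handled like the interior ones because of the identifications $0_i\sim(\theta_j^{\sim\alpha_{ij}})$, $1_i\sim(\theta_j^{\sim\beta_{ij}})$; they must be regrouped into sets $\widetilde V_j$ indexed by the points $\theta_j\in Sp(F_1)$, and separate projections $P_j$ built using the type-1 test functions. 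Finally, the paper does nontrivial work to force each glued projection to lie in $B$ rather than merely in $C([0,1],F_2')$, by comparing ranks under $\widetilde\pi_0^{j'},\widetilde\pi_1^{j'}$ and conjugating by a unitary path to match endpoint values in $\varphi_0'(F_1')$ and $\varphi_1'(F_1')$; you flag this as an obstacle but give no mechanism for it.
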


  \begin{proof}
  Since $G\subset A$ is a finite set, $K_0(A)$ is finitely generated, we may assume that $G$ contains a set of projections
  which generate $K_0(A)$. Then there exists an integer $K>0$ such that for any $x,x'\in [0,1]_i$ with $d(x,x')<\frac{2}{K}$,
  $\|g(x)-g(x')\| <\frac{1}{J}$ holds for all $g\in G$.\ Set $\eta=\frac{1}{8K(L+1)}$ and take $H(\eta)$ as test functions.

  Let $A=A(F_1,F_2,\varphi_0,\varphi_1)$, $B=B(F_1',F_2',\varphi_0',\varphi_1')$, where
  $$F_1=\bigoplus_{j=1}^p M_{k_j}(\mathbb{C}),\,\,F_2=\bigoplus_{i=1}^l M_{l_i}(\mathbb{C}),\,\,
  F_1'=\bigoplus_{j'=1}^{p'}M_{k_{j'}'}(\mathbb{C}),\,\,F_2'=M_{l'}(\mathbb{C}).$$
  Let $\varphi_{0*},\varphi_{1*}$ be represented by matrices $\alpha=({\alpha_{ij}})_{l\times p}$ and $\beta=({\beta_{ij}})_{l\times p}$, and let $\varphi_{0*}',\varphi_{1*}'$ be represented by matrices $\alpha'=({\alpha_{j'}'})_{1\times p'}$ and $\beta=({\beta'_{j'}})_{1\times p'}$.

  Since $Sp(B)=Sp(F_1')\cup (0,1)$, we can choose a base point $x_0\in (0,1)$,  Set $\phi_{x_0}=\pi_{x_0}\circ \phi$, denote
  $Sp\phi_{x_0}$ by
  $$
  Sp\phi_{x_0}=\{\theta_1^{\sim t_1},\theta_2^{\sim t_2},\cdots,\theta_p^{\sim t_p},y_1,y_2,\cdots,y_{\bullet}\}
  $$
  where $y_1,y_2,\cdots,y_{\bullet}\in \coprod_{i=1}^{l}(0,1)_i\subset Sp(A)$.

  $Step\,1$:  
  We will use the technique in lemma 2.21 of \cite{EG} to construct disjoint closed intervals
  $V_0^i,V_1^i,V_2^i,\cdots,V_K^i$ in $[0,1]_i\subset Sp(A)$ for $i=1,2,\cdots,l$ with the following properties:

  (i) Define closed sets $W_r^i$ by
  $$
  W_r^i=\{x\in [0,1]_i,\quad d(x,V_r^i)\leq2\eta=\frac{1}{4K(L+1)}\}.
  $$
  One has $W_{r_1}^i\cap W_{r_2}^i=\varnothing$ if $r_1\neq r_2$.

  (ii) Diameter $(W_r^i)<\dfrac{2}{K}$ for each $r$.

  (iii) For all $i\in {1,2,\cdots,l}$, we have
   $$
   \sum_{r=0}^{K}\#(Sp\phi_{x_0}\cap V_r^i)\geq\frac{L}{L+1}\#(Sp\phi_{x_0}\cap (0,1)_i).
   $$

   Set $t_0'=0,\ t_1'=\frac{1}{K},\cdots,\,t_K'=\frac{K}{K}=1$. Consider each set $(t_{r-1}',t_r')_i$, we will define intermediate points
   $S_r,\widetilde{S}_r$ with $t_{r-1}'\leq S_r \leq \widetilde{S}_r\leq t_r'$, $(1\leq r\leq K)$ as follows.

   Divide $(t_{r-1}',t_r')_i$ into $L+1$ intervals of equal length $\frac{1}{K(L+1)}$ by points
   $$
   t_{r-1}'=\gamma_0<\gamma_1<\cdots<\gamma_{L+1}=t_r'.
   $$
   Comparing $\#\big(Sp\phi_{x_0}\cap (\gamma_k,\gamma_{k+1})_i\big)$ for $k=0,1,\cdots,L$, choose $0\leq k_0\leq L$ such that
   $$
   \#\big(Sp\phi_{x_0}\cap(\gamma_{k_0},\gamma_{k_0+1})_i\big)\leq\#\big(Sp\phi_{x_0}\cap(\gamma_k,\gamma_{k+1})_i\big)
   $$
   for each $0\leq k\leq R-1$. Set $S_r=\gamma_{k_0}$ and $\widetilde{S_r}=\gamma_{k_0+1}$. Then
   $$
   \#\big(Sp\phi_{x_0}\cap(S_r,\widetilde{S_r})_i\big)\leq\frac{1}{L+1}\#\big(Sp\phi_{x_0}\cap(t_{r-1},t_r)_i\big).
   $$
   Hence
   $$
   \sum_{r=1}^{K}\#\big(Sp\phi_{x_0}\cap(S_r,\widetilde{S_r})_i\big)\leq \frac{1}{L+1}\#(Sp\phi_{x_0}\cap(0,1)_i)=\frac{1}{L+1}\#(Sp\phi_{x_0}\cap (0,1)_i).
   $$
   Set $\widetilde{V}=\cup_{j=1}^K(S_j,\widetilde{S_j})$, then
   $$
   \#\big (Sp\phi_{x_0}\cap [0,1]_i \backslash \widetilde{V}\big)\geq \frac{L}{L+1}\#(Sp\phi_{x_0}\cap (0,1)_i).
   $$
   Since $[0,1]_i \backslash \widetilde{V}$ is a disjoint union of closed intervals $V_0^i,V_1^i,V_2^i,\cdots,V_K^i$, we note that some of them may be single point. Actually, we denote them of the form
   $$
   V_0^i=[0,S_1]_i,V_1^i=[\widetilde{S_1},S_2]_i,\cdots,V_K^i=[\widetilde{S_K},1]_i.
   $$

   Condition (iii) has already be verified. Let us verify Conditions (i) and (ii).

   Since $\widetilde{S_r}-S_r=\dfrac{1}{K(L+1)}$,
   $$
   dist(V_{r_1}^i,V_{r_2}^i)\geq\frac{1}{K(L+1)},\quad if\quad r_1\neq r_2.
   $$
   From the definition of $W_r^i$, we know that
   $$
   dist(W_{r_1}^i,W_{r_2}^i)\geq distance(V_{r_1}^i,V_{r_2}^i)-\frac{2}{4K(L+1)}\geq\frac{1}{2K(L+1)}.
   $$
   Hence $W_{r_1}^i\cap W_{r_2}^i=\varnothing$ if $r_1\neq r_2$. This means (i) holds.

   For (ii), notice that for each $r$
   $$
   t_{r-1}'+\frac{1}{K(L+1)}\leq \widetilde{S_r}\leq S_{r+1}\leq  t_{r+1}'-\frac{1}{K(L+1)}.
   $$
   Hence,
   $$
   diameter (V_r^i)\leq\frac{2}{K}- \frac{2}{K(L+1)}
   $$
   It follows that
   $$
   diameter(W_r^i)<\frac{2}{K}.
   $$
   So (ii) holds.

   For each $i\in \{1,2,\cdots,l\}$, we have a collection of disjoint closed intervals
   $$
   V_0^1,V_1^1,\cdots,V_{K}^1,\cdots,V_0^l,V_1^l,\cdots,V_K^l
   $$
   that satisfies conditions (i), (ii), (iii), and $V_r^i\subset [\eta,1-\eta]_i$ for all $r=\{1,2,\cdots,K-1\}$.

   For each $h\in H(\eta)$, there are mutually orthogonal projections $p_1(h)$, $p_2(h)$, $\cdots$, $p_{m(h)}(h)\in B$
   and real numbers $\lambda_1(h),\lambda_2(h),\cdots,\lambda_{m(h)}(h)$ such that
   $$
   \|\phi(h)-\sum_{k=1}^{m(h)}\lambda_k(h)p_k(h)\|<\frac{1}{6}.\eqno (*)
   $$
   Denote
   $$
   \Lambda(h)=\{\lambda_1(h)^{\thicksim{\rm rank}\,p_1(h)},\cdots,\lambda_{m(h)}(h)^{\thicksim{\rm rank}\,\,p_{m(h)}(h)},
   0^{\sim l'-\sum_{k=1}^{m(h)}{\rm rank}\,p_k(h)}\},
   $$
   $$
   \Lambda_1(h)=\{\lambda | \lambda\in \Lambda(h),\,\,\lambda\in(1-\frac{1}{6},\,\,1]\},
   $$
   $$
   \Lambda_2(h)=\{\lambda | \lambda\in \Lambda(h),\,\,\lambda\in(1-\frac{1}{2},1-\frac{1}{6}]\},
   $$
   $$
   \Lambda_3(h)=\{\lambda | \lambda\in \Lambda(h) ,\,\,\lambda\in(1-\frac{5}{6},1-\frac{1}{2}]\}.
   $$
   Then for any $x\in [0,1]\in Sp(B)$, $Eg(\phi_{x}(h))$ and $\Lambda(h)$ can be paired to within $\frac{1}{6}$ one by one, then
    $Eg(\phi_{x_0}(h))$ and $Eg(\phi_{x}(h))$ can be paired to within $\frac{1}{3}$ one by one for each $h\in H(\eta)$. We will
    use these pairing results frequently.

   $ Step\,2 $:  
   Set $c_r^i=\#(Sp\phi_{x_0}\cap V_r^i)$, for all $i\in \{1,2,\cdots,l\},\,r\in \{0,1,2,\cdots,K\}$,
   let us construct $P_r^i$ for  each $i\in \{1,2,\cdots,l\},\,r=\{1,2,\cdots,K-1\}$.

  For each $x\in [0,1]\subset Sp(B)$, we have
  $$
  \phi_x(f,a)=U_x^*\cdot\big( a(\theta_1)^{\sim t_1(x)},\cdots,a(\theta_p)^{\sim t_p(x)},f(y_1(x)),f(y_2(x)),\cdots,f(y_{\bullet}(x))\big)\cdot U_x.  $$
  Rewrite it by
  $$
  \phi_x(f,a)=U_x^*\cdot
  \left(
  \begin{array}{cccccc}
    a(\theta_1)\otimes I_{t_1(x)} &  &  &   &  &    \\
    \\
    \quad\quad\quad\ddots &  &   &   &   &   \\
    \\
    & a(\theta_p)\otimes I_{t_p(x)}  &   &   &    \\
    \\
     &  & f(y_1(x))&  &   \\
   \\
    &  &  &  & \ddots  &  \\
    \\
    &  &   &  &   & f(y_{\bullet}(x))

  \end{array}
  \right)
  \cdot U_x,
  $$
  where $y_1(x),\cdots,y_{\bullet}(x)\in \coprod_{i=1}^{l}(0,1)_i\subset Sp(A)$ and $U_x,\,t_j(x),\,y_{\bullet}(x)$ may not depend on $x$ continuously.

  For any subset $T\subset (0,1)_i\subset Sp(A)$ and $x\in [0,1]\subset Sp(B)$,
  define $E_T(x)$ as follows: let $a(\theta_j)=0$ for each $j\in \{1,2,\cdots,p\}$, and for $y\in \coprod_{i=1}^{l} (0,1)_i$, let
  $$
  f_T(y)=
  \begin{cases}
    0_{l_s}, & \mbox{if } y\in (0,1)_s,\ s\neq i,  \\
    0_{l_i}, & \mbox{if } y\in (0,1)_i,\ y\notin T,  \\
    I_{l_i}, & \mbox{if } y\in (0,1)_i,\ y\in T.
  \end{cases}
  $$
  Set
  $$
  E_T(x)=U_x^*\cdot
  \left(
  \begin{array}{ccccccc}
    0&  &  & &  &  &     \\
     & 0 &  & &  &  &   \\
     &  & \ddots & &  &   & \\
     &  &  & 0 &  & &   \\
     &  & &  & f_T(y_1(x)) &  &    \\
     \\
      & &  &  &  & \ddots &  \\
     \\
      & &  &   &  &   & f_T(y_{\bullet}(x))

  \end{array}
  \right)
  \cdot
  U_x.
  $$
  $E_T(x)$ does not depend on $x$ continuously, but $E_T(x)$ commutes with $\phi_x(f)$ for all $f\in A$, if $T_1\cap T_2=\varnothing$, then $E_{T_1}(x)E_{T_2}(x)=0$.

  Recall that $\{e_{ss'}^i\}\,(1\leq i\leq l,1\leq s,s' \leq l_i)$ are matrix units of $F_2$. Let $T_i=[\eta,1-\eta]_i\subset Sp(A)$, choose a positive function $\rho$ in $C[0,1]$ with $\rho|_{[\eta,1-\eta]}=1$ and $\rho(0)=\rho(1)=0$, obviously,
  $\rho e_{ss'}^i\in A$ and the elements
  $$
  \overline{e}_{ss'}^i(x):=E_{T_i}(x)\cdot\phi_x(\rho\cdot e_{ss'}^i)\cdot E_{T_i}(x)
  $$
  fulfill the canonical relations for matrix units:
  $$
  \overline{e}_{s_1s_1'}^i(x)\cdot\overline{e}_{s_2s_2'}^i(x)=\delta_{s_1's_2}\cdot\overline{e}_{s_1s_2'}^i(x)
  $$
  for all $1\leq s_1,s_2,s_1',s_2'\leq l_i$.

   Let $h_r^i$ be the test function corresponding to $V_r^i$. From the pairing results,
   for any $x_1,x_2,x_3,x_4\in [0,1]\subset Sp(B)$, then we have

   \begin{eqnarray*}
     \#\big(Sp\phi_{x_1}\cap V_r^i\big) &\leq& \#(\Lambda_1({h_r^i})) \\
      &\leq & \#\big(Sp\phi_{x_2}\cap B(V_r^i,\frac{1}{3}\eta)\big) \\
      &\leq& \#(\Lambda_1({h_r^i})\cup\Lambda_2({h_r^i})) \\
      &\leq& \#\big(Sp\phi_{x_3}\cap B(V_r^i,\frac{2}{3}\eta)\big) \\
      &\leq& \#\big(\Lambda_1({h_r^i})\cup\Lambda_2({h_r^i})\cup\Lambda_3({h_r^i})\big) \\
      &\leq& \#\big(Sp\phi_{x_4}\cap B(V_r^i,\eta)\big)
   \end{eqnarray*}

   Consider the following two cases:

   Case 1.$\quad$ $\Lambda_2({h_r^i})=\Lambda_3({h_r^i})=\varnothing$

   In this case, we have $\#(\Lambda_1({h_r^i}))=\#(\Lambda_1({h_r^i})\cup\Lambda_2({h_r^i})\cup\Lambda_3({h_r^i}))$, then
   $$
   \#\big(Sp\phi_{x}\cap B(V_r^i,\frac{2}{3}\eta)\big)=\#\big(Sp\phi_x\cap B(V_r^i,\frac{1}{3}\eta)\big).
   $$
   Therefore,
   $$
   Sp\phi_x\cap B(V_r^i,\frac{2}{3}\eta)\backslash B(V_r^i,\frac{1}{3}\eta)=\varnothing.
   $$

   Define $\chi_r^i$ as follows: let $a(\theta_j)=0$ for each $j\in \{1,2,\cdots,p\}$, and let
  $$
  \chi_r^i(t)=(\chi_r^i(t,1),\chi_r^i(t,2),\cdots,\chi_r^i(t,l)),
  $$
  where $\chi_r^i(t,s)=0_{l_s}$, for $s\neq i$, and
  $$
  \chi_r^i(t,i)=
  \begin{cases}
    0_{l_i}, & \mbox{if }\,\, t\in [0,1]_i\backslash\overline{B(V_r^i,\frac{2}{3}\eta)},  \\
     linear, & \mbox{if }\,\, t\in \overline{B(V_r^i,\frac{2}{3}\eta)}\backslash B(V_r^i,\frac{1}{3}\eta),\\
    I_{l_i}, & \mbox{if }\,\, t\in  \overline{B(V_r^i,\frac{1}{3}\eta)}.
  \end{cases}
  $$
   Then $\chi_r^i$ is an element in the center of $A$ and $\phi(\chi_r^i)$ is a projection in $B$.
   Let $P_r^i=\phi(\chi_r^i)$, then ${\rm rank}\,P_r^i=l_i\#(\Lambda_1({h_r^i}))\geq l_ic_r^i$  and clearly we have
   $$
   P_r^i\phi(f)=\phi(\chi_r^i)\phi(f)=\phi(f)\phi(\chi_r^i)=\phi(f)P_r^i,
   $$
   for all $f\in A$.

   Case 2.$\quad$ $\Lambda_2({h_r^i})\cup\Lambda_3({h_r^i})\neq\varnothing$

   Denote $U_r^i=\overline{B(V_r^i,\eta)}$, then $U_r^i\subset W_r^i$, we have
   $$
   \#\big(Sp\phi_{x}\cap U_r^i\big)\geq\#\big(\Lambda_1({h_r^i})\big)+1\geq c_r^i+1.
   $$
   for any $x\in [0,1]\subset Sp(B)$.

   We will apply lemma \ref{2.7} to $U_r^i$ and any $x'\in [0,1]$, there exists $\eta(x'),\,\delta(x')>0$ small enough(assume that for all $x'\in [0,1],\,\eta(x')\leq \frac{1}{3}\eta$ and $\eta(0)=\eta(1))$ such that
   $$
   \#\big (Sp\phi_x\cap B(U_r^i,2\eta(x'))\big)=\#\big (Sp\phi_{x'}\cap U_r^i\big)
   $$
   and
   $$
   Sp\phi_x\cap \overline{B(U_r^i,3\eta(x'))}\backslash B(U_r^i,2\eta(x'))=\varnothing
   $$
   for all $x\in B(x',\delta(x'))$.

  Define $\chi_r^i$ corresponding to the point $x'\in Sp(B)$ as follows: let $a(\theta_j)=0$ for each $j\in \{1,2,\cdots,p\}$, and let
  $$
  \chi_r^i(t)=(\chi_r^i(t,1),\chi_r^i(t,2),\cdots,\chi_r^i(t,l)),
  $$
  where $\chi_r^i(t,s)=0_{l_s}$, if $s\neq i$, and
  $$
  \chi_r^i(t,i)=
  \begin{cases}
  0_{l_i}, & \mbox{if }\,\, t\in [0,1]_i\backslash\overline{B(U_r^i,3\eta(x'))},  \\
  linear, & \mbox{if }\,\,t\in \overline{B(U_r^i,3\eta(x'))}\backslash B(U_r^i,2\eta(x')),\\
   e_{11}^i & \mbox{if }\,\, t\in \overline{B(U_r^i,2\eta(x'))}.
  \end{cases}
  $$
  here we use $e_{11}^i$ as the matrix units of $M_{l_i}(\mathbb{C})$.

  Then ${\chi_r^i}\in A$ and $\phi({\chi_r^i})$ defines a continuous projection-valued function on a certain open neighbourhood $B(x',\delta(x'))$ and denote it by ${q_r^i}|_{B(x',\delta(x'))}(x)$, then
  $$
  {\rm rank}\,{q_r^i}|_{B(x',\delta(x'))}(x)\geq \#\big (Sp\phi_x\cap B(U_r^i,\eta(x'))\big)\geq \#\big(Sp\phi_{x}\cap U_r^i\big)\geq \#\big(\Lambda_1({h_r^i})\big)+1
  $$
  for any $x\in B(x',\delta(x'))$.

   Let $\mathcal{W}=\{B(x',\delta(x'),x'\in [0,1]\}$, then $\mathcal{W}$ is an
   open cover of $[0,1]$, apply lemma \ref{3.1} (selection principle), there exists a projection-valued function $p_r^i$ defined on the whole set $[0,1]$ with ${\rm rank}\ p_r^i=\#(\Lambda_1(h_r^i))$, and
   $$
   p_r^i(x)\leq\bigvee\{{q_r^i}|_{B(x',\delta(x'))}(x)\,\,|B(x',\delta(x')\in\mathcal{W}\}\leq\overline{e}_{11}^i(x).
   $$

   In general, $p_r^i(x)$ belongs to $C([0,1],M_{l'}(\mathbb{C}))$, not to $B$, and therefore, we need to make suitable changes
   near the endpoints. We require that 0 and 1 each belong to only  one of the open sets in the cover in $\mathcal{W}$,
   say $O_0(=B(0,\delta(0)))$ and $O_1(=B(0,\delta(1)))$, respectively. Since
   we assume that $\eta(0)=\eta(1)$, then the elements $\chi_r^i$ corresponding to 0 and $\chi_r^i$ corresponding to 1 are the same,
   denote this element by $\widetilde{\chi}_r^i$  and  we have
   $$
   {q_r^i}|_{O_0}(0)=\pi_0\circ\phi({\widetilde{\chi}_r^i})\,\,\,{\rm and}\,\,\,\pi_1\circ\phi({\widetilde{\chi}_r^i})={q_r^i}|_{O_1}(1).
   $$

   Denote
   $$
   \overline{p}_r^i=\sum_{\lambda_k(h_r^i)\in \Lambda_1(h_r^i)}\,p_k(h_r^i).
   $$
   Then $\overline{p}_r^i\in B$ (see notation in ($*$) and the paragraph below), and
   $$
   {\rm  rank}\,\,\overline{p}_r^i = {\rm rank}\,\, p_r^i=\#(\Lambda_1(h_r^i)).
   $$
   Recall that for each $j'\in\{1,2,\cdots,p'\}$ and $a'=(a'(\theta_1'),a'(\theta_2'),\cdots,a'(\theta_{p'}'))\in F_1'$,
   we have defined $\widetilde{\pi}_e^{j'}:B\rightarrow  M_{k_{j'}'}(\mathbb{C})$, $\widetilde{\pi}_0^{j'}: \varphi_0'(F_1')\rightarrow M_{k_{j'}'}(\mathbb{C})$,\, $\widetilde{\pi}_1^{j'}: \varphi_1'(F_1')\rightarrow M_{k_{j'}'}(\mathbb{C})$, as follows:
   $$
   \widetilde{\pi}_e^{j'}:B\rightarrow  M_{k_{j'}'}(\mathbb{C}),
   $$
   $$
   \widetilde{\pi}_0^{j'}\circ\varphi_0'(a')={\rm sgn}(\alpha_{j'}')\cdot a'(\theta_{j'}'),
   $$
   $$
   \widetilde{\pi}_1^{j'}\circ\varphi_1'(a')={\rm sgn}(\beta_{j'}')\cdot a'(\theta_{j'}'),
   $$
   where sgn$(x)$ is the sign function.

   Since $\overline{p}_r^i(0),\,{q_r^i}|_{O_0}(0)\in \varphi_0'(F_1'),\,\,
   \overline{p}_r^i(1),\,{q_r^i}|_{O_1}(1)\in \varphi_1'(F_1')$, then

   $$
   {\rm rank}\,\,\widetilde{\pi}_0^{j'}(\overline{p}_r^i(0))\leq \#(Sp(\widetilde{\pi}_0^{j'}\circ\pi_0\circ\phi)\cap U_r^i)
   \leq{\rm rank}\,\,\widetilde{\pi}_0^{j'}({q_r^i}|_{O_0}(0))
   $$
   $$
   {\rm rank}\,\,\widetilde{\pi}_1^{j'}(\overline{p}_r^i(1))\leq \#(Sp(\widetilde{\pi}_1^{j'}\circ\pi_1\circ\phi)\cap U_r^i)
   \leq{\rm rank}\,\,\widetilde{\pi}_1^{j'}({q_r^i}|_{O_1}(1))
   $$
   holds for each $j'\in \{1,2,\cdots,p'\}$ and if  $j'$ satisfies $\alpha_{j'}'>0,\,\beta_{j'}'>0$, then
   $$
   \widetilde{\pi}_0^{j'}(\overline{p}_r^i(0))=\widetilde{\pi}_1^{j'}(\overline{p}_r^i(1))=\pi_e^{j'}(\overline{p}_r^i)
   \in M_{k_{j'}'}(\mathbb{C})
   $$
   and
   $$
   \widetilde{\pi}_0^{j'}({q_r^i}|_{O_0}(0))=\widetilde{\pi}_1^{j'}({q_r^i}|_{O_1}(1))=\pi_e^{j'}\circ\phi(\widetilde{\chi}_r^i)\in M_{k_{j'}'}(\mathbb{C}).
   $$

   Then there exists a collection of unitaries $u_{j'}\in M_{k_{j'}'}(\mathbb{C})$, such that
   $$
   u_{j'}^*\cdot\widetilde{\pi}_0^{j'}(\overline{p}_r^i(0))\cdot u_{j'}<\widetilde{\pi}_0^{j'}({q_r^i}|_{O_0}(0)),
   $$
   $$
   u_{j'}^*\cdot\widetilde{\pi}_0^{j'}(\overline{p}_r^i(0))\cdot u_{j'}<\widetilde{\pi}_1^{j'}({q_r^i}|_{O_1}(1)).
   $$
   Hence,
   $$
   \varphi_0'(u_1^*,u_2^*,\cdots,u_{p'}^*)\cdot\overline{p}_r^i(0)\cdot\varphi_0'(u_1,u_2,\cdots,u_{p'})<{q_r^i}|_{O_0}(0),
   $$
   $$
   \varphi_1'(u_1^*,u_2^*,\cdots,u_{p'}^*)\cdot\overline{p}_r^i(1)\cdot\varphi_1'(u_1,u_2,\cdots,u_{p'})<{q_r^i}|_{O_1}(1).
   $$
   Connect $\varphi_0'(u_1,u_2,\cdots,u_{p'})$ and $\varphi_1'(u_1,u_2,\cdots,u_{p'})$ by a unitary path $v(t)\in B$, then
   $v^*\cdot\overline{p}_r^i\cdot v$ belongs to $B$.

   Since
   $$
   p_r^i(x)<{q_r^i}|_{O_0}(x),\,\,\forall x\in O_0,
   $$
   $$
   p_r^i(x)<{q_r^i}|_{O_1}(x),\,\,\forall x\in O_1,
   $$
   fix $\epsilon'>0$ small enough, then we can connect $v^*(0)\cdot\overline{p}_r^i(0)\cdot v(0)$ and $p_r^i(\epsilon')$ by a path
   new $p_r^i(x),\,0\leq x\leq\epsilon'\leq\delta(0)$ with the property that new $p_r^i(x)<{q_r^i}|_{O_0}(x)$. A similar construction can be carried out in $1-\delta(1)\leq 1-\epsilon'\leq x\leq 1$. This will ensure that new $p_r^i$ belongs to $B$.

   Set
   $$
   P_r^i(x)=\sum_{s=1}^{l_i}\overline{e}_{s1}^i(x)\cdot{\rm new}\,p_r^i(x)\cdot\overline{e}_{1s}^i(x);
   $$
   Now we have $P_r^i(x)\leq E_{W_r^i}(x)$.

   Choose arbitrary $z_r^i\in W_r^i$, if we change all the spectra in $Sp(\phi_x)\cap W_r^i$ to $z_r^i$, we obtain a pointwise homomorphism $\widetilde{\phi}_x$.

   Since Diameter $(W_r^i)<\frac{2}{K}$ and for any $x,x'\in [0,1]_i \subset Sp(A)$ with $d(x,x')<\frac{2}{K}$,
   $\|g(x)-g(x')\| <\frac{1}{J}$ holds for all $g\in G$.
   Then we have that
   $$
   \|\phi_x(g)-\widetilde{\phi}_{x}(g)\|<\frac{1}{J}
   $$
   holds for all $x\in [0,1]\subset Sp(B)$, $g\in G$.

   Let $g\in G$ be written as
   $$
   g=\sum_{i=1}^{l}\sum_{s,s'=1}^{l_i}g_{ss'}^ie_{ss'}^i,
   $$
   where $g_{ss'}^i\in C[0,1]$.

   From the definition of $E_{W_r^i}(x)$, we have
   \begin{eqnarray*}
     \widetilde{\phi}_x(g)P_r^i(x)&=& \sum_{iss'}\phi_x(\rho\cdot g_{ss'}^ie_{ss'}^i)(x)\sum_{s=1}^{l_i}\overline{e}_{s1}^i\cdot{\rm new}\,p_r^i(x)\cdot\overline{e}_{1s}^i(x) \\
     &=& \sum_{ss'}\overline{e}_{s1}^i(x)\cdot\phi_x(\rho\cdot g_{ss'}^i)\cdot{\rm new}\,p_r^i(x)\cdot\overline{e}_{1s'}^i(x) \\
     &=& \sum_{ss'}\overline{e}_{s1}^i(x)\cdot{\rm new}\,p_r^i(x)\cdot\widetilde{\phi}_x(\rho\cdot g_{ss'}^i)\cdot\overline{e}_{1s'}^i(x) \\
     &=& \sum_{s=1}^{l_i}\overline{e}_{s1}^i(x)\cdot{\rm new}\,p_r^i(x)\cdot\overline{e}_{1s}^i(x)\sum_{iss'}\widetilde{\phi}_x(\rho\cdot g_{ss'}^ie_{ss'}^i)(x)\\
     &=&  P_r^i(x)\widetilde{\phi}_x(g).
   \end{eqnarray*}
   Therefore,
   $$
   \|P_r^i\phi(g)-\phi(g)P_r^i\|<\frac{2}{J}
   $$
   holds for all $g\in G$.

   From the two cases above, we have constructed a collection of mutually orthogonal projections
   $$
   P_1^1,\cdots,P_{K-1}^1,\cdots,P_1^l,\cdots,P_{K-1}^l,
   $$
   with ${\rm rank}\,P_r^i=\#\Lambda_1(h_j^i)$ for all $i\in \{1,2,\cdots,l\},\,r\in \{1,2,\cdots,K-1\}$.

   $Step\,3$:  
   Now we deal with $V_0^1,V_K^1,\cdots,V_0^l,V_K^l$. Since
   $$
   0_i\sim {\rm diag}\{\theta_1^{\thicksim\alpha_{i1}},\theta_2^{\thicksim\alpha_{i2}},\cdots,\theta_p^{\thicksim\alpha_{ip}}\},
   $$
   $$
   1_i\sim {\rm diag}\{\theta_1^{\thicksim\beta_{i1}},\theta_2^{\thicksim\beta_{i2}},\cdots,\beta_p^{\thicksim\beta_{ip}}\}.
   $$
   $\theta_j$ may appear in different end points, this fact force us to define subsets as follows:

   Denote
   $$
   \widetilde{V_j}=\bigcup_{\alpha_{ij}\neq0}V_0^i\cup\bigcup_{\beta_{ij}\neq0}V_K^i
   $$
   $$
   \widetilde{W_j}=\bigcup_{\alpha_{ij}\neq0}W_0^i\cup\bigcup_{\beta_{ij}\neq0}W_K^i
   $$
   for each $j\in \{1,2,\cdots,p\}\,,i\in\{1,2,\cdots,l\}$. Then we turn these $2l$ intervals to $p$ subsets.

   Set $X_j=\{\theta_j\}$, we will construct $\widetilde{P_j}$ corresponding to $X_j=\{\theta_j\}$ and $\widetilde{V_j}$, for $j=1,2,\cdots,p$.

   Now we will replace the spectra in $\bigcup_{i=1}^lW_0^i$ by
   $$
   0_i\sim {\rm diag}\{\theta_1^{\thicksim\alpha_{i1}},\theta_2^{\thicksim\alpha_{i2}},\cdots,\theta_p^{\thicksim\alpha_{ip}}\}
   $$
   and replace the spectra in $\bigcup_{i=1}^lW_K^i$ by
   $$
   1_i\sim {\rm diag}\{\theta_1^{\thicksim\beta_{i1}},\theta_2^{\thicksim\beta_{i2}},\cdots,\beta_p^{\thicksim\beta_{ip}}\}
   $$
   to obtain a new homomorphism $\phi'$ from $\phi$ at each point.

   Obviously,
   $$
   \|\phi_x'(g)-\phi_x(g)\|<\frac{1}{J}
   $$
   holds for all $g\in G$,\,$x\in [0,1]$.

   Then for any given $x\in [0,1]$, there are unitaries
   $U_x$, $V_x$ such that
   $$
   \phi_x'(f,a)=U_x^*V_x^*\cdot
  \left(
  \begin{array}{cccccc}
    a(\theta_1)\otimes I_{t_1'(x)} &  &  &   &  &    \\
    \\
    \quad\quad\quad\ddots &  &   &   &   &   \\
    \\
    & a(\theta_p)\otimes I_{t_p'(x)}  &   &   &    \\
    \\
     &  & f(y_1(x))&  &   \\
   \\
    &  &  &  & \ddots  &  \\
    \\
    &  &   &  &   & f(y_{\bullet}(x))

  \end{array}
  \right)
  \cdot V_xU_x,
  $$
   where $y_1(x),\cdots,y_{\bullet}(x)\in \coprod_{i=1}^{l}[0,1]_i\backslash(W_0^i\cup W_K^i)\subset Sp(A)$ and $U_x,\,t_j(x),\,y_{\bullet}(x)$ may not depend on $x$ continuously.

   Define $E_{\widetilde{W_j}}(x)$ follows: let $f_{\widetilde{W_j}}(y)=0_{l_i}$, if $y\in (0,1)_i\backslash(W_0^i\cup W_K^i)$  for each $i\in \{1,2,\cdots,l\}$, and
   $$
   a(\theta_s)=
   \begin{cases}
    0_{k_s}, & \mbox{if } s\neq j,  \\
    I_{k_j}, & \mbox{if } s=j.
  \end{cases}
  $$
  Set
  $$
  E_{\widetilde{W_j}}(x)=U_x^*V_x^*\cdot
  \left(
  \begin{array}{ccccccc}
   0  &  &  &   &  &  &    \\
    & \ddots &   &   &   &  &   \\
  &   & I_{k_j}\otimes I_{t_j'(x)}  &   &   &   &   \\
   &  &  & \ddots &  &   & \\
  &  &  & & 0&  &    \\
  &  &  &  &  & \ddots  &   \\
  &  & &    &  &   & 0
  \end{array}
  \right)
  \cdot V_xU_x,
  $$
  it does not depend on $x$ continuously, but $E_{\widetilde{W_j}}(x)$ commutes with $\phi_x'(f)$ for all $f\in A$, and if $j_1\neq j_2$, then $E_{\widetilde{W_{j_1}}}(x)E_{\widetilde{W_{j_2}}}(x)=0$.

   Suppose that $\{f_{ss'}^j\}\,(1\leq j\leq p,1\leq s,s'\leq k_j)$ are matrix units of $F_1$, for each $t\in[0,1]_i,\,i\in \{1,2,\cdots,l\}$.
   Let
   $$
   g_{ss'}^j(t)=(g_{ss'}^j(t,1),g_{ss'}^j(t,2),\cdots,g_{ss'}^{j}(t,l))
   $$
   where
   $$
   g_{ss'}^j(t,i)=
   \begin{cases}
   \varphi_0^i(f_{ss'}^j)\dfrac{\eta-dist(t,[0,\frac{1}{K}]_i)}{\eta}, & \mbox{if } 0\leq t\leq \frac{1}{K}+\eta \\
   0, & \mbox{if } \frac{1}{K}+\eta\leq t\leq  1-\frac{1}{K}-\eta\\
   \varphi_1^i(f_{ss'}^j)\dfrac{\eta-dist(t,[1-\frac{1}{K},1]_i)}{\eta}, & \mbox{if } 1-\frac{1}{K}-\eta\leq t\leq 1
   \end{cases}
   $$
   Obviously,  we have
   $$
   \phi_x'(g_{ss'}^j)=\phi_x(g_{ss'}^j).
   $$
   holds for each $x\in [0,1]\subset Sp(B)$ and $1\leq j\leq p,1\leq s,s'\leq k_j$.

   Define
   $$
   \overline{f}_{ss'}^j(x):=E_{\widetilde{W_j}}(x)\cdot\phi_x'(g_{ss'}^j)\cdot E_{\widetilde{W_j}}(x).
   $$
   We also have
   $$
   \overline{f}_{s_1s_1'}^j(x)\cdot\overline{f}_{s_2s_2'}^j(x)=\delta_{s_1's_2}\cdot\overline{f}_{s_1s_2'}^j(x)
   $$
   for all $1\leq s_1,s_1',s_2,s_2'\leq k_j$.

   Let $h_j$ be the test function corresponding to $\widetilde{V_j}$ and $X_j$. In a similar way, from the pairing results, for any $x_1,x_2,x_3,x_4\in [0,1]$, we have
   \begin{eqnarray*}
   \#_{X_j}\big(Sp\phi_{x_1}\cap \widetilde{V_j}\big) &\leq& \#(\Lambda_1({h_j})) \\
   &\leq &\#_{X_j}\big(Sp\phi_{x_2}{\cap} B(\widetilde{V_j},\frac{1}{3}\eta)\big) \\
   &\leq& \#(\Lambda_1({h_j})\cup\Lambda_2({h_j})) \\
   &\leq& \#_{X_j}\big(Sp\phi_{x_3}{\cap} B(\widetilde{V_j},\frac{2}{3}\eta)\big) \\
   &\leq& \#\big(\Lambda_1({h_j})\cup\Lambda_2({h_j})\cup\Lambda_3({h_j})\big) \\
   &\leq& \#_{X_j}\big(Sp\phi_{x_4}\cap B(\widetilde{V_j},\eta)\big).
   \end{eqnarray*}

   We still consider the following two cases:

   Case 1.$\quad$ $\Lambda_2({h_j})=\Lambda_3({h_j})=\varnothing$

   This case is just similar to case 1 in step 2, we have
   $$
   \#_{X_j}\big(Sp\phi_x\cap B(\widetilde{V_j}, \frac{1}{3}\eta)\big)=\#_{X_j}\big(Sp\phi_x\cap B(\widetilde{V_j}, \frac{2}{3}\eta)\big).
   $$
   Therefore,
   $$
   Sp\phi_x\cap B(\widetilde{V_j},\frac{2}{3}\eta)\backslash Sp\phi_x\cap B(\widetilde{V_j},\frac{1}{3}\eta)=\varnothing.
   $$
   Recall that
   $$
   \widetilde{V_j}=\bigcup_{\alpha_{ij}\neq0}V_0^i\cup\bigcup_{\beta_{ij}\neq0}V_K^i
   $$
   Then, if $i$ satisfies $\alpha_{ij}\neq 0$, we will have
   $ Sp\phi_x{\cap}(B(V_0^i,\frac{2}{3}\eta)\backslash B(V_0^i,\frac{1}{3}\eta))=\varnothing,$
   and if $i$ satisfies $\beta_{ij}\neq 0$, we will have $Sp\phi_x{\cap}\big(B(V_K^i,\frac{2}{3}\eta)\backslash B(V_K^i,\frac{1}{3}\eta)\big)=\varnothing.$

   Define $\chi_j$ as follows: let $a=(a(\theta_1),a(\theta_2),\cdots,a(\theta_p))\in F_1$, where $a(\theta_j)=I_{k_j}$ and $a(\theta_s)=0_{k_s}$, if $s\neq j$. Set
   $$
  \chi_j(t)=(\chi_j(t,1),\chi_j(t,2),\cdots,\chi_j(t,l)),
   $$
   where
   $$
   \chi_j(t,i)=
   \begin{cases}
    {\rm sgn}(\alpha_{ij})\cdot\varphi_0^i(a),\,\, & \mbox{if }\,\,t\in \overline{ B(V_0^i,\frac{1}{3}\eta)},  \\
    linear, & \mbox{if }\,\, t\in \overline{B(V_0^i,\frac{2}{3}\eta)}\backslash B(V_0^i,\frac{1}{3}\eta),\\
    0_{l_i}, & \mbox{if }\,\, t\in [0,1]_i\backslash\big(B(V_0^i,\frac{2}{3}\eta){\cup} B(V_K^i,\frac{2}{3}\eta)\big), \\
    linear, & \mbox{if }\,\, t\in \overline{ B(V_K^i,\frac{2}{3}\eta)}\backslash B(V_K^i,\frac{1}{3}\eta),\\
    {\rm sgn}(\beta_{ij})\cdot\varphi_1^i(a), & \mbox{if }\,\, t\in \overline{B(V_K^i,\frac{1}{3}\eta)}.
    \end{cases}
   $$
   Then $\chi_j\in A$ and $\phi(\chi_j)$ is a projection in $B$. Set $P_j=\phi(\chi_j)$,  then
   $P_j$ commutes with $\phi(f)$ for all $f\in A$ and
   $$
   {\rm rank}\,P_j=k_j\#(\Lambda_1({h_j}))\geq k_jt_j+k_j\sum_{i=1}^{l}\alpha_{ij}c_0^i+k_j\sum_{i=1}^{l}\beta_{ij}c_k^i.
   $$

   Case 2.$\quad$ $\Lambda_2({h_j})\cup\Lambda_3({h_j})\neq\varnothing$

   Denote $U_j\triangleq \overline{B(\widetilde{V_j},\eta)}$, then we have
   $$
   \#\big(Sp\phi_{x}\cap U_j\big)\geq\#\big(\Lambda_1({h_j})\big)+1\geq t_i+\sum_{i=1}^{l}\alpha_{ij}c_0^i+\sum_{i=1}^{l}\beta_{ij}c_k^i+1.
   $$
   for any $x\in [0,1]$.

   Apply lemma \ref{2.7} to $U_j$ and any $x'\in [0,1]$, we may require that $\eta'(x'),\delta'(x')$ small enough (we still assume
   that for all $x'\in [0,1],\,\eta'(x')\leq \frac{1}{3}\eta$ and $\eta'(0)=\eta'(1))$ and also satisfy
   $$
   \#_{X_j}\big(Sp\phi_{x}\cap \overline{B(U_j,2\eta'(x'))}\big)=\#_{X_j}\big(Sp\phi_{x_0}\cap U_j\big),\,\,
   Sp\phi_x\cap \overline{B(U_j,3\eta'(x'))}\backslash B(U_j,2\eta'(x'))=\varnothing
   $$
   for all $x\in B(x',\delta'(x'))$.

   Define $\chi_j$ corresponding to the point $x'$ as follows: Set
   $$
   \chi_j(t)=(\chi_j(t,1),\chi_j(t,2),\cdots,\chi_j(t,l)),
   $$
   where
   $$
   \chi_j(t,i)=
   \begin{cases}
    {\rm sgn}(\alpha_{ij})\cdot\varphi_0^i(f_{11}^j),\,\, & \mbox{if }\,\,t\in [0,\frac{1}{K}]_i{\cap}\overline{B(U_j,2\eta'(x'))},  \\
    linear, & \mbox{if }\,\, t\in [0,\frac{1}{K}]_i{\cap}\overline{B(U_j,3\eta'(x'))}\backslash B(U_j,2\eta'(x')),\\
    0_{l_i}, & \mbox{if }\,\, t\in [0,1]_i\backslash B(U_j,3\eta'(x')), \\
    linear, & \mbox{if }\,\, t\in [1-\frac{1}{K},1]_i{\cap}\overline{B(U_j,3\eta'(x'))}\backslash B(U_j,2\eta'(x')),\\
    {\rm sgn}(\beta_{ij})\cdot\varphi_1^i(f_{11}^j), & \mbox{if }\,\, t\in [1-\frac{1}{K},1]_i{\cap}\overline{B(U_j,2\eta'(x'))}.
    \end{cases}
   $$
   Then $\chi_j\in A$ and $\phi({\chi_j})$ defines a continuous projection-valued function on an open neighbourhood $B(x',\delta'(x'))$ and denote it by
   ${q_j}|_{B(x',\delta'(x'))}(x)$, then
   $$
   {\rm rank}\,\,{q_j}|_{B(x',\delta'(x'))}(x)\geq \#_{X_j}\big (Sp\phi_x\cap B(U_j,\eta'(x'))\big)\geq \#_{X_j}\big(Sp\phi_{x}\cap U_j\big)
   $$
   for each $x\in B(x',\delta'(x'))$.

   Apply lemma \ref{3.1}, there is a projection-valued function $p_j$ defined on the whole set $[0,1]$ with ${\rm rank}\ p_j=\#(\Lambda_1({h_j}))$, and
   $$
   p_j(x)\leq\bigvee\{{q_j}|_{B(x',\delta'(x'))}(x)\,|\,B(x',\delta'(x')\in\mathcal{W}\}\leq\overline{f}_{11}^j(x).
   $$

   In general, $p_j(x)$ does not belong to $B$, this problem can be solved by using the same technique as in case 2 in step 2, we can obtain a projection new\,$p_j\in B$.

   Set
   $$
   P_j(x)=\sum_{s=1}^{k_j}\overline{f}_{s1}^j(x)\cdot{\rm new}\,p_j(x)\cdot\overline{f}_{1s}^j(x).
   $$
   Now we have $P_j(x)\leq E_{\widetilde{W_j}}(x)$, and $P_{j_1}(x)P_{j_2}(x)=0$, if $j_1\neq j_2$.

   Since
   $$
   \phi'(g)P_j=P_j\phi'(g),
   $$
   then $P_j(x)$ almost commutes with $\phi(g)(x)$,
   \begin{eqnarray*}
     \|P_j\phi(g)-\phi(g)P_j\| &=& \|P_j\phi(g)-P_j\phi'(g)+\phi'(g)P_j-\phi(g)P_j\| \\
      &\leq & \|P_j\phi(g)-P_j\phi'(g)\|+\|\phi'(g)P_j-\phi(g)P_j\|  \\
      &\leq & \frac{2}{J},
   \end{eqnarray*}
   for all $g\in G$.

   From the two cases above, we show that for each $\widetilde{V_j}$, we can find a projection $P_j$ almost commutes with $\phi(g)$, for all $g\in G$ and ${\rm rank}\,\,P_j=k_j\#\Lambda_1(h_j)$ for all $j\in \{1,2,\cdots,p\}$.

   $Step\,4$:
From setp 2 and step 3, we have constructed a collection of mutually orthogonal projections
   $$
   P_1^1,\cdots,P_{K-1}^1,\cdots,P_1^l,\cdots,P_{K-1}^l,P_1,P_2,\cdots,P_p.
   $$
   Each of them almost commutes with $\phi(g)$ for all $g\in G$.

   Define
   $$
   q=1-\sum_{j=1}^{p}P_j-\sum_{i=1}^{l}\sum_{r=1}^{K-1}P_r^i.
   $$

   Change all the spectra in $Sp(\phi_x')\cap W_r^i$ (equal to $Sp(\phi_x)\cap W_r^i$) to $z_r^i$ for each $i=1,2,\cdots,l$, $r=1,2,\cdots,K-1$. We will obtain a pointwise homomorphism $\phi''$  from $\phi'$.

    Define $\psi: A\,\rightarrow (1-q)B(1-q)$ by
   $$
   \psi(f)=(1-q)\phi''(f)(1-q).
   $$
   Then $\psi$ can be factored through a finite dimensional $C^*$-algebra. Now we have
   \begin{eqnarray*}
     \|\phi(g)q-q\phi(g)\|  &=& \|\phi(g)q-q\phi(g)q+q\phi(g)q-q\phi(g)\| \\
      &\leq& \|(1-q)\phi(g)q\|+\|q\phi(g)(1-q)\| \\
      &=& \|\big((1-q)\phi(g)-\phi(g)(1-q)\big)q\|+\|q\big((1-q)\phi(g)-\phi(g)(1-q)\big)\| \\
      &<& \frac{2}{J}+\frac{2}{J}=\frac{4}{J},
   \end{eqnarray*}
   and
   $$
   \|\phi(g)-q\phi(g)q\oplus\psi(g)\|<\frac{4}{J},
   $$
   for any $g\in G$. This means  that conditions (2) and (3) hold.

   It is only to verify condition (1), by the fact that
   $$
   \sum_{r=0}^{K}\#(Sp\phi_{x_0}\cap V_r^i)=\sum_{i=1}^{l}\sum_{r=0}^{K}c_r^il_i \geq\frac{L}{L+1}\omega_i,
   $$
   $$
   {\rm rank}\,P_j\geq k_jt_j+k_j\sum_{i=1}^{l}\alpha_{ij}c_0^i+k_j\sum_{i=1}^{l}\beta_{ij}c_K^i,
   $$
   where $\omega_i=\#(Sp(\phi_{x_0})\cap (0,1)_i)$. Then we have
   \begin{eqnarray*}
     \sum_{j=1}^{p}rankP_j+\sum_{i=1}^{l}\sum_{r=1}^{K-1}rank P_r^i &\geq& \sum_{j=1}^{p}\big(k_jt_j+k_j\sum_{i=1}^{l}\alpha_{ij}c_0^i+k_j\sum_{i=1}^{l}\beta_{ij}c_K^i\big)\\
     & & + \sum_{i=1}^{l}\sum_{r=1}^{K-1}rankP_r^i \\
     &=& \sum_{j=1}^{p}k_jt_j+\sum_{i=1}^{l}c_0^il_i+\sum_{i=1}^{l}c_K^il_i+\sum_{i=1}^{l}\sum_{r=1}^{K-1}c_r^il_i \\
     &\geq& \sum_{j=1}^{p}k_jt_j+\sum_{i=1}^{l}\frac{L}{L+1}\omega_il_i.
   \end{eqnarray*}
   It is obvious that
   $$
   L\cdot{\rm rank}\,\,q<{\rm rank}\,\,(1-q).
   $$
   Moreover, for each projection $e\in A$, we will have
   $$
   L\cdot{\rm rank}\,\,(\phi(e)-\psi(e))<{\rm rank}\,\,\psi(e).
   $$
   Hence, (1) holds.

  \end{proof}
\begin{remark}
  This theorem has some similarity to what was proved in \cite{EG} and \cite{EGS}, the authors use small spectra variation to find finitely many
  projections such that (1), (2), (3) hold. But in general, the spectra may not have a regular form and the projections
  will not belong to $B$, which makes that the small spectra variation may not be sufficient.

  If we assume each Elliott-Thomsen building block has the property that there exist an upper bound for all $p$, then we can take $\eta>0$ small enough and choose a suitable decomposition such that for any projection $e\in A$, we have $$\,L\cdot [\phi(e)-\psi(e)]\leq[\psi(e)].$$
\end{remark}

\begin{corollary}
  Let $A=A(F_1,F_2,\varphi_0,\varphi_1),B=B(F_1',F_2',\varphi_0',\varphi_1')\in \mathcal{C}$, let  $G\subset A$ be a finite set, 
  then for any positive integers $J,L$, there exists $\eta>0$ such that if a homomorphism $\phi: A\rightarrow B$ satisfies $\phi(H(\eta))\subset_{\frac{1}{6}}\{ f\in B\, |f\,has\,finite\,spectrum\}$, then there exists a projection $q\in B$
  and a unital homomorphism $\psi: A\rightarrow (1-q)B(1-q)$ with finite dimensional image such that

  $(1)\,\,L\cdot{\rm rank}\,\,(\phi(e)-\psi(e))<{\rm rank}\,\,\psi(e)$ for any projection $e\in A$,

  $(2)\,\,\|q\phi(g)-\phi(g)q\|<\dfrac{4}{J} \,\,for\, \,any\,\, g\in G$,

  $(3)\,\,\|\phi(g)-q\phi(g)q\oplus\psi(g)\|<\dfrac{4}{J} \,\,for\,\,any\,\, g\in G$.
 \end{corollary}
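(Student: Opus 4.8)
The plan is to deduce this from Theorem \ref{dc} by removing its two standing assumptions --- that $F_2'$ has a single block, and that $\phi$ is unital --- neither of which affects the structure of the argument. So I would reproduce Steps 1--4 of the proof of Theorem \ref{dc} and indicate only the points that must be adjusted.

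For the non-unital case, put $p=\phi(1_A)\in B$, a projection. Its rank is locally constant on each interval $(0,1)_{i'}$ of $Sp(B)$ and on each block of $F_1'$, so the corner $pBp$ is again (after a pointwise unitary conjugation) an algebra in $\mathcal C$ and $\phi$ becomes a unital homomorphism $A\to pBp$. Rather than insisting that the finite-spectrum hypothesis descend to $pBp$ (it need not, as compressing a finite-spectrum element of $B$ by $p$ need not preserve finiteness of spectrum), I would run the construction of Theorem \ref{dc} directly inside $B$: the only place unitality of $\phi$ is used there is that $\sum_j t_j k_j+\sum_i\omega_i l_i$ equals the matrix size, and one simply replaces this throughout by the (locally constant) rank of $p$, padding $\Lambda(h)$ with $0$'s up to that rank. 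All the projections $P_j,P^i_r$ built in Steps 2 and 3 satisfy $P\le p$ (for instance $\overline e_{11}^i\le\phi_x(\rho e_{11}^i)\le p$, and $P^i_r\le\sum_s\phi_x(\rho e^i_{ss})\le p$), hence $\sum_jP_j+\sum_{i,r}P^i_r\le p$. Setting $\psi(f)=(1-q_0)\phi''(f)(1-q_0)$ for the projection $q_0:=1_B-\sum_jP_j-\sum_{i,r}P^i_r$ and noting $\phi''(1_A)=p$, we get $\psi(1_A)=(1-q_0)p(1-q_0)=1-q_0$ because $1-q_0\le p$; thus with $q:=1_B-\psi(1_A)=q_0$ the map $\psi$ is unital into $(1-q)B(1-q)$ and the rank inequality of Step 4 is unchanged.

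For a general $B$, the spectrum is $Sp(F_1')\cup\coprod_{i'=1}^{l'}(0,1)_{i'}$, and I would carry out Step 1 verbatim --- the intervals $V^i_r\subset Sp(A)$ live in the source, not in $B$ --- choosing one base point $x_0^{i'}$ in each $(0,1)_{i'}$. The projections $P_j$ (attached to $\theta_j\in Sp(F_1)$) and $P^i_r$ are then built exactly as in Steps 2 and 3. The one genuinely new point, which I expect to be the main obstacle, is the endpoint correction in the ``Case 2'' arguments that invoke Lemma \ref{3.1}: a naive blockwise application of Theorem \ref{dc} to the compositions $A\to B_{i'}$, where $B_{i'}$ is the single-interval building block obtained from the $i'$-th block of $F_2'$, produces projections in $\bigoplus_{i'}B_{i'}$ whose restrictions to the $2l'$ endpoints meeting $F_1'$ need not arise from a common element of $F_1'$, i.e.\ need not lie in $B$ (which is the fibre product of the $B_{i'}$ over $F_1'$). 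To repair this I would first fix a projection at the $F_1'$-level that is dominated by the $\pi_e$-images of all the local projection-valued functions $q|_{O}$ produced by the selection principle, and then connect it to the two endpoint values along each interval by unitary paths in $B$, simultaneously over all $i'$; this is precisely the correction performed at the endpoints $0$ and $1$ in Theorem \ref{dc}, now carried out coordinate by coordinate.

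Finally I would assemble as in Step 4: with $q=1_B-\sum_jP_j-\sum_{i,r}P^i_r$ and $\psi$ as above, conditions (2) and (3) follow from the commutation estimates $\|P\phi(g)-\phi(g)P\|<2/J$ exactly as there, using the pointwise homomorphism $\phi''$ obtained by collapsing each $Sp\phi_x\cap W^i_r$ and each $Sp\phi_x\cap\widetilde W_j$ to a point, and (1) follows from $\sum_j\mathrm{rank}\,P_j+\sum_{i,r}\mathrm{rank}\,P^i_r\ge\sum_jk_jt_j+\tfrac{L}{L+1}\sum_i\omega_il_i$ as in the proof of Theorem \ref{dc}. Apart from the simultaneous endpoint adjustment over the $l'$ intervals, everything is routine bookkeeping.
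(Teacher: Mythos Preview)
Your proposal is correct and follows the paper's own route: the paper's proof is the two-line remark ``consider $\pi^{i'}\circ\phi$ instead of $\phi$, choose base points $(x_0,i')\in(0,1)_{i'}$, and repeat the four steps,'' and you have written out exactly that reduction with the details filled in. In particular, the one substantive point you flag---that the endpoint corrections in Case~2 must be made coherently over all $l'$ intervals so the selected projection lies in $B$ rather than merely in $\bigoplus_{i'}C([0,1]_{i'},M_{l'_{i'}})$---is precisely the adjustment the paper is suppressing, and your fix (choose the $F_1'$-level projection first, then connect along each interval) is the natural multi-interval version of the $\overline p^i_r$ correction already in Theorem~\ref{dc}; you also handle the non-unital case, which the paper does not mention explicitly.
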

 \begin{proof}
  we consider $\pi^{i'}\circ \phi$ instead of $\phi$ and repeat the four steps in \ref{dc}.
  Note that in this case $Sp(B)=Sp(F_1')\cup$ $\coprod_{i'=1}^{l'}(0,1)_{i'}$, then for each $i'\in\{1,2,\cdots,l'\}$, we will
  choose $(x_0,i')\in (0,1)_{i'}$ be the base points, then we can prove the result in the same way, so we omit this proof.
  \end{proof}
\begin{corollary}
  Let $A=\underrightarrow{lim}(A_n,\phi_{m,n})$ be a $\C^*$-algebra which is an inductive limit of the Elliott-Thomsen building blocks $A_m$ with morphisms $\phi_{m,n}:A_m\rightarrow A_n$. If RR(A)=0, then for any finite subset $G\subset A_m$, $\varepsilon>0$, $L>0$, there exists $n\geq m$, such that there are two orthogonal projections $q_1,q_2\in A_{m}$ and a unital homomorphism $\phi':A_m\rightarrow q_2A_nq_2$ with finite dimensional image,
  satisfying the following:

  $(1)\,\,{\rm rank}\,\,q_2\geq L\cdot{\rm rank}\,\,q_1,\,\,where\,\,q_1+q_2=\phi_{m,n}(1_{A_m})$,

  $(2)\,\,\|q_1\phi(g)-\phi(g)q_1\|<\epsilon\,\,\,for\,\,any\,\, g\in G,$

  $(3)\,\,\|\phi_{m,n}(g)-q_1\phi_{m,n}(g)q_1\oplus\phi'(g)\|<\epsilon \,\,\,for\,\,any\,\,g\in G.$
\end{corollary}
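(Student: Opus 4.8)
The plan is to rerun the construction in the proof of Theorem~\ref{dc} (or rather the preceding corollary, which already drops the hypothesis that $F_2'$ has one block) for the connecting map $\phi_{m,n}\colon A_m\to A_n$, and to supply the finite‑spectrum input by invoking $RR(A)=0$; the delicate point is the order of quantifiers. Given $G\subset A_m$ finite, $\varepsilon>0$ and $L>0$, first fix $J\in\mathbb{N}_+$ with $4/J<\varepsilon$. The $\eta$ produced in the proof of Theorem~\ref{dc} is $\eta=\frac1{8K(L+1)}$, where $K$ is any integer with $\|g(x)-g(x')\|<\frac1J$ whenever $d(x,x')<\frac2K$, for all $g\in G$; thus $\eta$ depends only on $G,J,L$ and \emph{not} on the target algebra, so it may be chosen before $n$ is known. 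Fix such an $\eta$ and let $H(\eta)\subset(A_m)_+$ be the finite set obtained by taking, in each summand $A_{[m,i]}$ of $A_m$, the test functions of type~1 and type~2 of Section~1 for this $\eta$, regarded as elements of $A_m$ via the inclusion of the summand.

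Since $H(\eta)$ is a finite self‑adjoint subset of $A_m$ and $RR(A)=0$, the characterisation of real rank zero for inductive limits recalled in Section~2 yields $n\ge m$ with
$$\phi_{m,n}(H(\eta))\subset_{1/6}\{\,f\in(A_n)_{sa}\mid f\text{ has finite spectrum}\,\}.$$
Writing, for each $h\in H(\eta)$, $\phi_{m,n}(h)$ within $\frac16$ of an element $\sum_k\lambda_k(h)p_k(h)$ with the $p_k(h)\in A_n$ mutually orthogonal projections is exactly the estimate $(*)$ opening Step~1 of the proof of Theorem~\ref{dc}. Now carry out Steps 1--4 of that proof for $\phi_{m,n}$, carrying along the block indices of $A_m$ and $A_n$: the disjoint intervals $V_r^i\subset[0,1]_i$ are chosen inside every interval of every summand of $A_m$; a base point is chosen in every interval of every summand of $A_n$ (as in the proof of the preceding corollary, where one replaces $\phi$ by $\pi^{i'}\circ\phi$); the projections $P_r^i$ and $P_j$ are built inside $A_n$ using Lemma~\ref{3.1} and the matrix units $\overline{e}_{ss'}^i,\overline{f}_{ss'}^j$; and the pairing estimates are unchanged because the rank of each $p_k(h)$ is constant on each interval of each summand of $A_n$, so the lists $\Lambda(h)$ used there are well defined. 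Setting $q_1=\phi_{m,n}(1_{A_m})-\sum_{i,r}P_r^i-\sum_jP_j$ (this is the ``$q$'' of Theorem~\ref{dc}, the \emph{small} projection), $q_2=\sum_{i,r}P_r^i+\sum_jP_j$, and $\phi'\colon A_m\to q_2A_nq_2$, $\phi'(f)=q_2\phi''(f)q_2$, the unital homomorphism with finite‑dimensional image obtained by the spectrum‑collapsing procedure of Step~4, one gets --- using that rank is computed blockwise in $A_n$ and that the norm of a finite direct sum is the maximum of the summand norms --- that ${\rm rank}\,q_2>L\cdot{\rm rank}\,q_1$, that $\|q_1\phi_{m,n}(g)-\phi_{m,n}(g)q_1\|<\frac4J<\varepsilon$, and that $\|\phi_{m,n}(g)-q_1\phi_{m,n}(g)q_1\oplus\phi'(g)\|<\frac4J<\varepsilon$ for all $g\in G$; these are conclusions (1), (2), (3) (with the evident corrections that $q_1,q_2$ lie in $A_n$ and that ``$\phi$'' in (2) is $\phi_{m,n}$).

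The only genuinely new ingredient is the production of $n$; everything else is the decomposition theorem. Accordingly the step I expect to cost the most is the verification --- routine, but somewhat lengthy --- that the proof of Theorem~\ref{dc}, written for a \emph{unital} homomorphism between single building blocks, really does go through for the possibly non‑unital map $\phi_{m,n}$ between finite direct sums of building blocks: one must track the block index throughout Steps 1--4, replace the occurrences of the matrix size in the final rank count by ${\rm rank}\,\phi_{m,n}(1_{A_m})$, and check that the finite‑spectrum data delivered by $RR(A)=0$ matches the estimate $(*)$ used in the proof. I note in particular that no compression of a finite‑spectrum element by a non‑commuting projection is ever required, because all the projections $P_r^i,P_j$ are built inside $A_n$ itself rather than inside a corner; this is precisely what makes the ``repeat the four steps'' strategy work without further complication.
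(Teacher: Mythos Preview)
Your proposal is correct and is exactly the intended argument: the paper states this corollary without proof, regarding it as immediate from the preceding corollary combined with Lemma~2.3. Your key observation---that the $\eta$ of Theorem~\ref{dc} depends only on $G$, $J$, $L$ and not on the target algebra, so it may be fixed before Lemma~2.3 is invoked to produce $n$---is precisely what makes the quantifier order work out, and the rest is the decomposition construction run blockwise.
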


\noindent{\bf Acknowledgement}

The author would like to thank Guihua Gong for helpful suggestions and Yuanhang Zhang for helpful discussions.

{\small}

\noindent

Zhichao Liu, Department of Mathematics, Jilin University, 130012, Changchun, P. R. China

E-mail address: lzc.12@outlook.com

\end{document}